\numberwithin{equation}{section}
\newtheorem{Proposition}{Proposition}[section]
\newtheorem{Corollary}{Corollary}[section]
\newtheorem{Definition}{Definition}[section]
\begin{document}

\begin{titlepage}

\title{Statistical Geometry and Hessian Structures on Pre-Leibniz Algebroids}

\author{Keremcan Do\u{g}an\footnote{E-mail: kedogan[at]ku.edu.tr} \\ \small Department of Physics, Ko\c{c} University, 34450 Sar{\i}yer, \.{I}stanbul, Turkey}

\date{\textit{Dedicated to Tekin Dereli in honor of his retirement}}

\maketitle

\begin{abstract}
\noindent We introduce statistical, conjugate connection and Hessian structures on anti-commutable pre-Leibniz algebroids. Anti-commutable pre-Leibniz algebroids are special cases of local pre-Leibniz algebroids, which are still general enough to include many physically motivated algebroids such as Lie, Courant, metric and higher-Courant algebroids. They create a natural framework for generalizations of differential geometric structures on a smooth manifold. The symmetrization of the bracket on an anti-commutable pre-Leibniz algebroid satisfies a certain property depending on a choice of an equivalence class of connections which are called admissible. These admissible connections are shown to be necessary to generalize aforementioned structures on pre-Leibniz algebroids. Consequently, we prove that, provided certain conditions, statistical and conjugate connection structures are equivalent when defined for admissible connections. Moreover, we also show that for `projected-torsion-free' connections, one can generalize Hessian metrics and Hessian structures. We prove that any Hessian structure yields a statistical structure, where these results are completely parallel to the ones in the manifold setting. We also prove a mild generalization of the fundamental theorem of statistical geometry. Moreover, we generalize $\alpha$-connections, strongly conjugate connections and relative torsion operator, and prove some analogous results.
\end{abstract}

\vskip 2cm

\textit{Keywords}: Statistical Geometry, Conjugate Connection Structures, Hessian Structures, Pre-Leibniz Algebroids, Admissible Connections
\thispagestyle{empty}

\end{titlepage}

\maketitle

\section{Introduction}

\noindent Algebroids are mathematical frameworks that are suitable to generalize differential geometric structures. Many different notions of algebroids are studied extensively in the literature. For example, metric-affine geometries can be constructed on the tangent bundle of a smooth manifold. Tangent bundles are special cases for Lie algebroids, and geometric structures on the tangent bundle can be easily generalized to an arbitrary Lie algebroid \cite{1}. On the other hand, generalized geometry on the direct sum of the tangent and cotangent bundles can be written on an arbitrary exact Courant algebroid \cite{2}. Generalized geometry is closely related to the local double field theory formulation of the bosonic sector of string theories \cite{3}. Local double field theory geometrizes the Kalb-Ramond $B$-field and $T$-duality, whereas exceptional field theories do the same thing for other sectors and $U$-duality by making use of different algebroids \cite{4}. There are many other algebroid structures such as metric algebroids \cite{5}, dull alberoids \cite{6}, conformal Courant algebroids \cite{7} that are used in the string or M theory literature. All of these are special cases for local pre-Leibniz algebroids as the latter is defined by using just a few axioms. Hence, the construction of differential geometric objects on local pre-Leibniz algebroids creates a chance to study all of these examples in a more comprehensive scheme. This was done in one of our previous papers \cite{8} by building up the ideas of \cite{9}, where we constructed metric-connection geometries on local pre-Leibniz algebroids.

Even though these algebroids yield fruitful generalizations, and they satisfy many analogous properties to the usual metric-affine geometries; many features, and in particular the geometric intuition, are lost. In order to solve this issue, we defined anti-commutable pre-Leibniz algebroids in \cite{10}. These algebroids are defined as pre-Leibniz algebroids whose bracket's symmetrization satisfies a certain property which depends on a choice of an equivalence class of connections that we called admissible. Even though anti-commutable ones are a special case for local pre-Leibniz algebroids, they are still general enough to include many other cases such that Lie, Courant, metric, higher-Courant and conformal Courant algebroids. We proved that for Lie algebroids, any connection is admissible; whereas for Courant and metric algebroids, the admissibility condition is equivalent to metric-compatibility. For other cases, we also proved an equivalence with a metric-compatibility condition in a generalized sense. Our claim in \cite{10} was that while working on the geometry of an algebroid structure, one should consider only admissible connections. We tried to justify this claim by proving many important properties including Bianchi identity, Cartan structure equations and the decomposition of the connection in terms of its torsion and non-metricity tensors for admissible connections. Moreover, we proved that any torsion-free, and in particular Levi-Civita, connection has to be admissible. In other words, there are no Levi-Civita connections on local pre-Leibniz algebroids which are not anti-commutable. In this paper, we present another application of anti-commutable pre-Leibniz algebroids and admissible connections. We construct the generalizations of conjugate connection, statistical and Hessian structures on anti-commutable pre-Leibniz algebroids. We prove that many analogous properties of the usual formulations of these structures hold only for admissible connections.

Statistical, conjugate connection and Hessian structures on a smooth manifold are important ingredients in statistical geometry. The relevance of these geometric structures to statistics comes from the fact that every statistical structure has a corresponding statistical model \cite{11} which is defined through a function that represents how a parameter is related to the potential outcomes of a random variable. The correspondence is of the form of the following dictionary \cite{12}. Points of the manifold correspond to probability density functions, whereas the metric is identified with the Fisher information matrix which is related to the amount of information that a random variable carries about an unknown parameter. Tangent vectors of the manifold correspond to score functions that are gradients of the log-likelihood functions which measure the fit of the model to the sample data. Two conjugate affine connections correspond to the two special connections $\nabla^{(1)}$ and $\nabla^{(-1)}$ in a statistical model. Moreover, the non-metricty tensor is identified with the skewness tensor which measures the third cumulants of the model. Without getting into the details of this correspondence, the focus will be on the differential geometric framework.

In Section (\ref{se2}), we outline the constructions of metric-connection geometries on pre-Leibniz algebroids. Tensors, vector fields, connections, metrics and related structures such as torsion, curvature and non-metricity tensors are defined in this framework. Anti-commutable pre-Leibniz algebroids and admissible connections on them are explained whereas some useful properties are pointed out. In Section (\ref{se3}), statistical geometry on a smooth manifold is summarized where the main references are \cite{12}, \cite{13} and \cite{14}. Conjugate connection and statistical structures are defined, and their equivalence is explained. In Section (\ref{se4}), we introduce the algebroid versions of conjugate connection and statistical structures, and proved that they are equivalent provided certain conditions. These conditions make use of the fact that for admissible connections, torsion operator is anti-symmetric. In Section (\ref{se5}), Hessian metrics and Hessian structures on manifolds are briefly summarized, and their relation to statistical structures with flat connections is explained. Generalizations of Hessian structures are defined for pre-Leibniz algebroids in Section (\ref{se6}), where we also proved that any Hessian structure in this setting yields a statistical structure. Moreover, a mild generalization of the fundamental theorem of statistical geometry is proven. Similarly, these proofs depend on the admissibility of the connection. In Section (\ref{se7}), we finish the paper with some concluding remarks.

Throughout the paper, Einstein's summation convention of repeated indicies is used. Every construction is assumed to be in the smooth category. Moreover, $M$ always denotes an orientable, paracompact, Hausdorff, smooth manifold. The tangent bundle of $M$ is denoted by $T(M)$, whereas $\mathfrak{X}(M)$ denotes the set of vector fields. Vector fields are derivations over the ring of smooth functions $C^{\infty}(M, \mathbb{R})$, and the action of a vector field $V$ on a smooth function $f$ will be denoted by $V(f)$. Given an affine connection $\nabla$, the torsion and curvature operators are defined as
\begin{align}
&T(\nabla)(U, V) := \nabla_U V - \nabla_V U - [U, V], \nonumber\\
&R(\nabla)(U, V)W := \nabla_U \nabla_V W - \nabla_V \nabla_U W - \nabla_{[U, V]} W,
\label{e01}
\end{align}
for $U, V, W \in \mathfrak{X}(M)$, where $[.,.]$ is the Lie bracket. For an affine connection $\nabla$ and a metric $g$, the non-metricity tensor is defined as $Q(\nabla, g) := \nabla g$. While constructing metric-connection geometries on pre-Leibniz algebroids in the second section (\ref{se2}), even though we use different notations for most of the structures, some of them will be identical including connections $\nabla$, local frames $(X_a)$, and their dual local co-frames $(e^a)$ with the ones in the manifold setting. For the other objects, we usually use the notations with reversed lower/upper-case letters. For instance, $V, \omega$ denote a vector field and a $p$-form in the usual differential geometry, whereas $v, \Omega$ denote a vector field and a $p$-form on an algebroid. We hope that this notation is clear enough to notice the analogies between the two settings.


\section{Anti-Commutable Pre-Leibniz Algebroids}
\label{se2}

\noindent On an arbitrary (real) vector bundle $E$ over $M$, one can introduce $(q, r)$-type $E$-tensors as:
\begin{equation}
Tens^{(q, r)}(E) := \Gamma \left( \bigotimes_{i = 1}^q E \otimes \bigotimes_{j = 1}^r E^* \right).
\label{eb1}
\end{equation}
The sections of the bundle $E$ itself are called $E$-vector fields, and their set is denoted by $\mathfrak{X}(E) := \Gamma(E)$. A local basis $(X_a)$ of $E$ is called a local $E$-frame, and its dual $\left( e^a \ | \ e^a(X_b) = \delta^a_{\ b} \right)$ is called a local $E$-coframe. An anti-symmetric $(0, p)$-type $E$-tensor is called an $E$-$p$-form, and their set is denoted by $\Omega^p(E)$. A $(0, 2)$-type $E$-tensor is called an $E$-metric if it is symmetric and non-degenerate. 

In order to introduce $E$-connections, one should work with anchored vector bundles. An anchored vector bundle is a doublet $(E, \rho)$ such that $E$ is a vector bundle over $M$, and $\rho: E \to T(M)$ is a vector bundle morphism. A linear $E$-connection on the anchored vector bundle $(E, \rho)$ is an $\mathbb{R}$-bilinear map $\nabla: \mathfrak{X}(E) \times \mathfrak{X}(E) \to \mathfrak{X}(E)$ satisfying
\begin{align} 
&\nabla_u (f v) = \rho(u)(f) v + f \nabla_u v, \nonumber\\
&\nabla_{f u} v = f \nabla_u v,
\label{eb2}
\end{align}
for all $u, v \in \mathfrak{X}(E), f \in C^{\infty}(M, \mathbb{R})$ \cite{15}. $E$-connection coefficients are defined as
\begin{equation} \Gamma(\nabla)^a_{\ b c} := e^a(\nabla_{X_b} X_c),.
\label{eb3}
\end{equation}
One can extend the action of a linear $E$-connection to the whole $E$-tensor algebra by the Leibniz rule. The $E$-non-metricity tensor corresponding to a linear $E$-connection $\nabla$ and an $E$-metric $g$ is defined as 
\begin{equation} Q(\nabla, g) := \nabla g.
\label{eb4}
\end{equation}
If it vanishes, then $\nabla$ is called $E$-metric-$g$-compatible. 

For the generalizations of torsion and curvature operator, one should introduce a bracket. The bracket is not necessarily anti-symmetic as opposed to the Lie bracket of usual vector fields, so one should work with two Leibniz rules. A triplet $(E, \rho, [.,.]_E)$ is called an almost-Leibniz algebroid if $(E, \rho)$ is an anchored vector bundle, and $[.,.]_E: \mathfrak{X}(E) \times \mathfrak{X}(E) \to \mathfrak{X}(E)$ is an $\mathbb{R}$-bilinear map which satisfy the right-Leibniz rule
\begin{equation} [u, f v]_E = \rho(u)(f) v + f [u, v]_E,
\label{eb4b}
\end{equation}
for all $u, v \in \mathfrak{X}(E), f \in C^{\infty}(M, \mathbb{R})$. Moreover, a quartet $(E, \rho, [.,.]_E, L)$ is called a local almost-Leibniz algebroid \cite{16}, \cite{9} if $(E, \rho, [.,.]_E)$ is an almost-Leibniz algebroid which satisfies the following left-Leibniz rule for some $C^{\infty}(M, \mathbb{R})$-multilinear map $L: \Omega^1(E) \times \mathfrak{X}(E) \times \mathfrak{X}(E) \to \mathfrak{X}(E)$ called ``\textit{the locality operator}''
\begin{equation} [f u, v]_E = - \rho(v)(f) u + f [u, v]_E + L(Df, u, v),
\label{eb5}
\end{equation}
for all $u, v \in \mathfrak{X}(E), f \in C^{\infty}(M, \mathbb{R})$, where $D: C^{\infty}(M, \mathbb{R}) \to \Omega^1(E)$ is the coboundary map defined by $(Df)(u) := \rho(u)(f)$ \cite{17}.

The naive generalizations of torsion and curvature operators are not $C^{\infty}(M, \mathbb{R})$-linear, so one needs to modify their definitions to get tensorial quantities. The $E$-torsion operator of a linear $E$-connection $\nabla$ is defined as a map $T(\nabla): \mathfrak{X}(E) \times \mathfrak{X}(E) \to \mathfrak{X}(E)$ such that
\begin{equation} T(\nabla)(u, v) := \nabla_u v - \nabla_v - [u, v]_E + L(e^a, \nabla_{X_a} u, v).
\label{eb6}
\end{equation}
If $T(\nabla) = 0$, then the linear $E$-connection $\nabla$ is called $E$-torsion-free.

For the curvature operator, one needs to work on a pre-Leibniz algebroid which is defined as an almost-Leibniz algebroid $(E, \rho, [.,.]_E)$ satisfying
\begin{equation} \rho([u, v]_E) = [\rho(u), \rho(v)],
\label{eb7}
\end{equation}
for all $u, v \in \mathfrak{X}(E)$, where the bracket on the right-hand side is the Lie bracket of usual vector fields. Moreover, one needs to assume that the anchored vector bundle $(E, \rho)$ is regular, i. e. the anchor $\rho$ is of locally constant rank. On a regular local pre-Leibniz algebroid, a ``\textit{locality projector}'' is defined as a $C^{\infty}(M, \mathbb{R})$-linear map $\mathcal{P}: \mathfrak{X}(E) \to \mathfrak{X}(E)$ satisfying \cite{8}
\begin{itemize}
\item The image of the composition map $\widehat{L}:= \mathcal{P} L$ is in the kernel of the anchor,
\item When restricted on the kernel of the anchor, $\mathcal{P}$ should be the identity map.
\end{itemize}
Note that these properties imply that $\mathcal{P}^2 = \mathcal{P}$. Given a locality projector on a regular local pre-Leibniz algebroid, one can define the $E$-curvature operator of a linear $E$-connection $\nabla$ as a map $R(\nabla): \mathfrak{X}(E) \times \mathfrak{X}(E) \times \mathfrak{X}(E) \to \mathfrak{X}(E)$ such that
\begin{equation} R(\nabla)(u, v)w := \nabla_u \nabla_v w - \nabla_v \nabla_u w - \nabla_{[u, v]_E} + \nabla_{\widehat{L}(e^a, \nabla_{X_a} u, v)} w.
\label{eb8}
\end{equation}
If $R(\nabla) = 0$, then the linear $E$-connection $\nabla$ is called $E$-flat.

In \cite{10}, motivated from the modifications of $E$-torsion and $E$-curvature operators, we introduced the concept of ``anti-commutable'' almost-Leibniz algebroids as a special case of local almost-Leibniz algebroids. An anti-commutable almost-Leibniz algebroid is defined as an almost-Leibniz algebroid $(E, \rho, [.,.]_E)$ which satisfies
\begin{equation} [u, v]_E + [v, u]_E = L(e^a, \nabla_{X_a} u, v) + L(e^a, \nabla_{X_a} v, u),
\label{eb9}
\end{equation}
for some $C^{\infty}(M, \mathbb{R})$-multilinear map $L: \Omega^1(E) \times \mathfrak{X}(E) \times \mathfrak{X}(E) \to \mathfrak{X}(E)$ and some linear $E$-connection $\nabla$, where $(X_a)$ is a local $E$-frame with its dual $(e^a)$. One can show that $(E, \rho, [.,.]_E, L)$ is a local almost-Leibniz algebroids. Given a fixed $L$, the choice of the linear $E$-connection $\nabla$ is not unique, and any linear $E$-connection satisfying Equation (\ref{eb9}) is caled admissible. We proved that many algebroid structures in the literature are anti-commutable including Lie, Courant, metric, higher-Courant and conformal Courant algebroids. For Lie algebroids, every linear $E$-connection is admissible because of the anti-symmetry of the bracket the locality operator can be chosen as 0. For metric and Courant algebroids, a linear $E$-connection is admissible if and only if it is compatible with the $E$-metric in the definition of the algebroid. For the remaining two cases, we proved a similar characterization for a generalization of the metric-compatibility condition. 

Equation (\ref{eb9}) motivates one to introduce ``\textit{the modified bracket}'':
\begin{equation} [u, v]_E^{\nabla} := [u, v]_E - L(e^a, \nabla_{X_a} u, v).
\label{eb10}
\end{equation}
It is indeed still a local almost-Leibniz bracket, in particular it is an almost-dull bracket meaning that the locality operator is 0. In terms of this modified bracket, the $E$-torsion operator (\ref{eb6}) becomes
\begin{equation} T(\nabla)(u, v) = \nabla_u v - \nabla_v u - [u, v]_E^{\nabla}.
\label{eb11}
\end{equation}
The modified bracket $[.,.]_E^{\nabla}$ is anti-symmetric if and only if the linear $E$-connection $\nabla$ is anti-symmetric. This trivially implies that any $E$-torsion-free linear $E$-connection has to be admissible. Moreover, it also implies that the $E$-torsion operator of an admissible linear $E$-connection is anti-symmetric. Similarly, for the $E$-curvature operator, one can introduce ``\textit{the projected modified bracket}'':
\begin{equation} [u, v]_E^{\widehat{\nabla}} := [u, v]_E - \widehat{L}(e^a, \nabla_{X_a} u, v).
\label{eb11b}
\end{equation}
It is a pre-dull bracket, and it is anti-symmetric for an admissible linear $E$-connection. Evidently, the $E$-curvature operator for an admissible linear $E$-connection $\nabla$ satisfies
\begin{equation} R(\nabla)(u, v)w = - R(\nabla)(v, u) w,
\label{eb12}
\end{equation}
for all $u, v, w \in \mathfrak{X}(E)$. As the $E$-torsion and $E$-curvature operators are defined by using two different brackets, in \cite{10} we introduced the $C^{\infty}(M, \mathbb{R})$-bilinear ``\textit{projected $E$-torsion operator}'':
\begin{equation} \widehat{T}(u, v) := \nabla_u v - \nabla_v u - [u, v]_E^{\widehat{\nabla}}.
\label{eb13}
\end{equation}
The $E$-curvature and projected $E$-torsion operators of any linear $E$-connection $\nabla$ satisfy the following Ricci identity:
\begin{equation} \nabla^2_{u, v} w - \nabla^2_{v, u} w = R(\nabla)(u, v) w - \nabla_{\widehat{T}(\nabla)(u, v)} w,
\label{eb14}
\end{equation}
for all $u, v, w \in \mathfrak{X}(E)$, where the second order $E$-covariant derivative is defined as
\begin{equation} \nabla^2_{u, v} w := \nabla_u \nabla_v w - \nabla_{\nabla_u v} w.
\label{eb15}
\end{equation}

In terms of the modified and projected modified brackets, one can introduce the $E$-version of the exterior derivative, which we called the projected $E$-exterior derivative $\widehat{d}: \Omega^p(E) \to \Omega^{p+1}(E)$:
\begin{align} 
\widehat{d}(\nabla) \Omega (v_1, \ldots, v_{p+1}) := & \sum_{1 \leq i \leq p+1} (-1)^{i+1} \rho(v_i) \left( \Omega(v_1, \ldots, \check{v}_i, \ldots, v_{p+1}) \right) \nonumber\\
& + \sum_{1 \leq i < j \leq p+1} (-1)^{i+j} \Omega \left( [v_i, v_j]_E^{\widehat{\nabla}}, v_1, \ldots, \check{v}_i, \ldots, \check{v}_j, \ldots, v_{p+1} \right),
\label{eb16}
\end{align}
where $\check{v}$ indicates that $v$ is excluded. This map does not square to 0 in general, but when acting on smooth functions it squares to 0, i. e. $\widehat{d}(\nabla)^2 f = 0$ for all $f \in C^{\infty}(M, \mathbb{R})$. Similarly one can define the $E$-exterior derivative $d(\nabla)$ by replacing the projected modified brackets by the modified brackets in Equation (\ref{eb16}), which does not square to 0 even for smooth functions.

A linear $E$-connection is called an $E$-Levi-Civita connection if it is $E$-torsion-free and $E$-metric-$g$-compatible. In \cite{10}, we proved that a linear $E$-connection is $E$-Levi-Civita if and only if it is an admissible $E$-Koszul connection. An $E$-Koszul connection is defined as a linear $E$-connection that satisfies the modification of the Koszul formula \cite{9}:

\begin{align} 
2 g(\nabla_u v, w) = & \ \rho(u)(g(v, w)) + \rho(v)(g(u, w)) - \rho(w)(g(u, v)) \nonumber\\
												& - g([v, w]_E^{\nabla}, u) - g([u, w]_E^{\nabla}, v) + g([u, v]_E^{\nabla}, w),
\label{eb17}
\end{align}
for all $u, v, w \in \mathfrak{X}(E)$.

\section{Statistical Structures on Manifolds}
\label{se3}

By the fundamental theorem of semi-Riemannian geometry, the Levi-Civita connection $^g \nabla$ is the only torsion-free affine connection that preserves the metric $g$ under parallel transport. On the other hand, by using two affine connections, one can also preserve the metric. This can be done via the conjugate affine connections. For an affine connection $\nabla$ on a smooth manifold $M$, its conjugate (or dual) with respect to the metric $g$ is defined as another affine connection $\nabla^*$ satisfying
\begin{equation} U(g(V, W)) = g(\nabla_U V, W) + g(V, \nabla^*_U W),
\label{Se1}
\end{equation}
for all $U, V, W \in \mathfrak{X}(M)$. On a local frame $(X_a)$ this can be written as
\begin{equation} X_a(g_{b c}) = \Gamma(\nabla)^d_{\ a b} g_{d c} + \Gamma(\nabla^*)^d_{\ a c} g_{b d},
\label{Se2}
\end{equation}
where the connection coefficients are defined by $\Gamma(\nabla)^a_{\ b c} := e^a(\nabla_{X_b} X_c)$. The triplet $(g, \nabla, \nabla^*)$ is called a conjugate connection structure on $M$. The invertibility of the metric $g$ implies that for every affine connection $\nabla$, there exists a unique conjugate affine connection with respect to the metric $g$. One can compare the definition of the conjugate affine connections with the metric-$g$-compatibility condition, where the latter implies
\begin{equation} U(g(V, W)) = g(\nabla_U V, W) + g(V, \nabla_U W),
\label{Se3}
\end{equation}
for all $U, V, W \in \mathfrak{X}(M)$. Hence, one can prove that an affine connection $\nabla$ is metric-$g$-compatible if and only if it is equal to its conjugate with respect to $g$. Moreover, as the conjugation is an involution, i.e. $(\nabla^*)^* = \nabla$ for all affine connections $\nabla$, the mean affine connection defined by
\begin{equation} \nabla^0 := \frac{1}{2} \left( \nabla + \nabla^* \right)
\label{Se4}
\end{equation}
is metric-$g$-compatible for every $\nabla$. 

Non-metricity tensors of two conjugate affine connections are related by
\begin{align} 
Q(\nabla, g)(U, V, W) &= - Q(\nabla^*, g)(U, V, W) \nonumber\\
&= g \left( \Delta(\nabla^*, \nabla)(U, V), W \right),
\label{Se5}
\end{align}
for all $U, V, W \in \mathfrak{X}(M)$, where the difference operator between two affine connections is defined by
\begin{equation} \Delta(\nabla, \nabla')(U, V) := \nabla_U V - \nabla'_U V.
\label{Se6}
\end{equation}

A symmetric $(0, p)$-type tensor $Z$ with $p \geq 2$ is called a $\nabla$-Codazzi tensor of order $p$ if $\nabla Z$ is totally symmetric. In particular, the metric $g$ is a $\nabla$-Codazzi tensor of order 2 when the non-metricity tensor $Q(\nabla, g)$ is totally symmetric. If one considers a torsion-free affine connection $\nabla$, then the following are equivalent:
\begin{enumerate}
\item The conjugate affine connection is torsion-free, i.e. $T(\nabla^*) = 0$.
\item The metric $g$ is $\nabla$-Codazzi.
\item The metric $g$ is $\nabla^*$-Codazzi.
\item The mean connection is the unique Levi-Civita connection, i.e. $\nabla^0 = \ ^g \nabla$.
\end{enumerate}

The doublet $(g, C)$ is called a statistical structure if $g$ is a metric and $C$ is a totally symmetric $(0, 3)$-type tensor. By the equivalence above, it has been shown that if one starts with a conjugate connection structure with torsion-free conjugate connections, then one can construct a statistical structure by choosing $C = Q(\nabla, g)$. In this context, $C$ is known as the Amari-Chentsov or skewness tensor.

One can also construct a conjugate connection structure starting from a statistical structure $(g, C)$. By using the total symmetry of $C$, the affine connections $\nabla$ and $\nabla^*$ can be defined as
\begin{align} 
&2 g(\nabla_U V, W) = 2 g(^g \nabla_U V, W) - C(U, V, W), \nonumber\\ 
&2 g(\nabla^*_U V, W) = 2 g(^g \nabla_U V, W) + C(U, V, W),
\label{Se7}
\end{align}
for all $U, V, W \in \mathfrak{X}(M)$. One can show that both $\nabla$ and $\nabla^*$ are torsion-free and their non-metricity tensors are given by
\begin{equation} Q(\nabla, g) = - Q(\nabla^*, g) = - C,
\label{Se8}
\end{equation}
which follows from the general fact 
\begin{align} 
2 g(\nabla'_U V, W) = & \ 2 g(^g \nabla'_U V, W) \nonumber \\
& - Q(\nabla', g)(U, V, W) - Q(\nabla', g)(V, U, W) + Q(\nabla', g)(W, U, V)  \nonumber\\
& - g(T(\nabla')(V, W), U) - g(T(\nabla')(U, W), V) + g(T(\nabla')(U, V), W),
\label{Se9}
\end{align}
for any affine connection $\nabla'$ by using the total symmetry of $C$. One should note that the total symmetry is necessary and sufficient for the simplifications that lead to Equation (\ref{Se7}). Because of the equivalence between statistical structures and conjugate connections structures, the former is often defined as a doublet $(g, \nabla)$ such that $g$ is $\nabla$-Codazzi. In this sense, the doublet $(g, \ ^g \nabla)$ is a statistical structure, and it is often called the trivial statistical structure.

A possible generalization would be introducing two objects $C$ and $B$ which compensate the non-metricity and torsion tensors, respectively. One can introduce
\begin{align} 
&2 g(\nabla_U V, W) = 2 g(^g \nabla_U V, W) - C(U, V, W) + g(B(V, W), U) - g(B(U, W), V) + g(B(U, V), W), \nonumber\\ 
&2 g(\nabla^*_U V, W) = 2 g(^g \nabla_U V, W) + C(U, V, W) + g(B(V, W), U) - g(B(U, W), V) + g(B(U, V), W),
\label{Se10}
\end{align}
for all $U, V, W \in \mathfrak{X}(M)$. By Equation (\ref{Se9}), it can be proven that $\nabla$ and $\nabla^*$ are conjugate affine connections whose non-metricity and torsion satisfy
\begin{align} 
&Q(\nabla, g) = - Q(\nabla^*, g) = - C, \nonumber\\
&T(\nabla) = T(\nabla^*) = B.
\label{Se11}
\end{align}
Here, the crucial point is that having the same torsion for two affine connections is necessary and sufficient to have a symmetric difference operator. As the non-metricity tensors of two conjugate connections depend on the difference operator as in (\ref{Se5}), having the same torsion guarantees that the non-metricity tensor is totally symmetric.

There is a closely related notion called quasi-statistical structure which includes torsionful affine connections. A doublet $(g, \nabla)$ is called a quasi-statistical structure if
\begin{equation} (\nabla_U g)(V, W) = (\nabla_V g)(U, W) - g(T(\nabla)(U, V), W),
\label{Se12}
\end{equation}
for all $U, V, W \in \mathfrak{X}(M)$. One can show that if $(g, \nabla)$ is a quasi-statistical structure, then the conjugate affine connection is torsion-free, i.e. $T(\nabla^*) = 0$.

Two conjugate affine connections $\nabla$ and $\nabla^*$ are said to be conjugate in the strong sense if 
\begin{equation} \nabla_U V - \nabla^*_V U = [U, V],
\label{Se13}
\end{equation}
for all $U, V \in \mathfrak{X}(M)$. For strongly conjugate affine connections, one has
\begin{equation} T(\nabla) = - T(\nabla^*).
\label{Se14}
\end{equation}
In particular, they have to be torsion-free at the same time. Furthermore, there are even stronger facts. For example, if $\nabla$ has a strong conjugate, then it has to be torsion-free. On the other hand, one can show that the Levi-Civita connection is strongly conjugate to itself. Combining these facts, one can conclude that if $\nabla$ and $\nabla^*$ are strongly conjugate, then $\nabla = \nabla^*$ so that $\nabla$ is the unique Levi-Civita connection.

As there are two affine connections in the framework, it is natural to define a relative torsion operator of two affine connections $\nabla$ and $\nabla'$
\begin{equation} T(\nabla, \nabla')(U, V) := \nabla_U V - \nabla'_V U - [U, V],
\label{Se15}
\end{equation}
One can explicitly show that it is $C^{\infty}(M, \mathbb{R})$-bilinear. The relative torsion operator for two conjugate affine connections $\nabla$ and $\nabla^*$ satisfies the following properties
\begin{equation} T(\nabla, \nabla^*)(U, V) = - T(\nabla^*, \nabla)(V, U),
\label{Se16}
\end{equation}
for all $U, V \in \mathfrak{X}(M)$, and
\begin{equation} T(\nabla, \nabla^*) + T(\nabla^*, \nabla) = T(\nabla) + T(\nabla^*).
\label{Se17}
\end{equation}
Note that if $T(\nabla, \nabla^*) = 0$, then $\nabla = \nabla^*$ and $\nabla$ is the Levi-Civita connection.

One can consider the one-parameter family of affine connections of the form a convex combination with respect to $\alpha \in \mathbb{R}$:
\begin{equation} \nabla^{(\alpha)} := \frac{1 + \alpha}{2} \nabla^* + \frac{1 - \alpha}{2} \nabla.
\label{Se18}
\end{equation}
These $\alpha$-dependent affine connections are called $\alpha$-connections. In particular one has
\begin{equation} \nabla^{(1)} = \nabla^*, \qquad \nabla^{(-1)} = \nabla, \qquad \nabla^{(0)} = \nabla^0.
\label{Se19}
\end{equation}
One can show that $(\nabla^{(\alpha)})^* = \nabla^{(- \alpha)}$. Moreover, torsion and non-metricity tensors of an $\alpha$-connection can be expressed as
\begin{align} 
&T(\nabla^{(\alpha)}) = \frac{1 + \alpha}{2} T(\nabla^*) + \frac{1}{2} T(\nabla), \nonumber\\
&Q(\nabla^{(\alpha)}, g) = - \alpha Q(\nabla, g) = \alpha Q(\nabla^*, g). \nonumber\\
\label{Se20}
\end{align}
Similarly, one can evaluate the curvature of $\alpha$-connections as follows:
\begin{align} 
R(\nabla^{(\alpha)})(U, V) W &= \frac{1 + \alpha}{2} R(\nabla^*)(U, V) W + \frac{1 - \alpha}{2} R(\nabla)(U, V) W \nonumber\\
& \quad + \frac{1 - \alpha^2}{4} \left[ \Delta(\nabla, \nabla^*) \left( V, \Delta(\nabla, \nabla^*)(U, W) \right) - \Delta(\nabla, \nabla^*) \left( U, \Delta(\nabla, \nabla^*)(V, W) \right) \right],
\label{Se21}
\end{align}
for all $U, V, W \in \mathfrak{X}(M)$. This expression implies that for a pair of flat conjugate affine connections $\nabla$ and $\nabla^*$,
\begin{equation} R(\nabla^{(\alpha)}) = R(\nabla^{(- \alpha)}).
\label{Se22}
\end{equation}


\section{Statistical Structures on Almost-Leibniz Algebroids}
\label{se4}

In this section, the statistical structures on manifolds will be generalized to the almost-Leibniz (and pre-Leibniz for curvature related topics) algebroid framework. The notions will be almost identical to the ones in the previous section with small differences. We will prove the generalizations of some of the basic results from the statistical geometry literature. 

As an affine connection on a manifold can be generalized to a linear $E$-connection on anchored vector bundle $(E, \rho)$, it is also possible to define conjugate linear $E$-connections. For a linear $E$-connection, its conjugate with respect to an $E$-metric $g$ is defined as another linear $E$-connection $\nabla^*$ which satisfies

\begin{equation} \rho(u)(g(v, w)) = g(\nabla_u v, w) + g(v, \nabla^*_u w),
\label{SSe1}
\end{equation}
for all $u, v, w \in \mathfrak{X}(E)$. This notion and its relation to contrast functions was studied in the Lie algebroid framework in \cite{18}. 

\begin{Definition} On an anchored vector bundle, the triplet $(g, \nabla, \nabla^*)$ is called a conjugate $E$-connection structure.
\label{SSd1}
\end{Definition}
On a local $E$-frame $(X_a)$, the conjugation condition (\ref{SSe1}) becomes
\begin{equation} \rho(X_a)(g_{b c}) = \Gamma(\nabla)^d_{\ a b} g_{d c} + \Gamma(\nabla^*)^d_{\ a c} g_{b d}.
\label{SSe2}
\end{equation}
Similarly to the usual case, the invertibility of the $E$-metric $g$ implies that for every linear $E$-connection, there exists a unique conjugate linear $E$-connection with respect to the $E$-metric $g$. As the $E$-metric-$g$-compatibility condition for a linear $E$-connection $\nabla$ can be expressed as
\begin{equation} \rho(u)(g(v, w)) = g(\nabla_u v, w) + g(v, \nabla_u w),
\label{SSe3}
\end{equation}
for all $u, v, w \in \mathfrak{X}(E)$, one can see that a linear $E$-connection $\nabla$ is $E$-metric-$g$-compatible if and only if $\nabla = \nabla^*$. Moreover, as the conjugation is involutive, the mean linear $E$-connection defined by
\begin{equation} \nabla^0 := \frac{1}{2} \left( \nabla + \nabla^* \right)
\label{SSe4}
\end{equation}
is an $E$-metric-$g$-compatible connection.

In order to continue with other geometric concepts, we need to introduce a bracket. Hence, we should assume that we are working on a local almost-Leibniz algebroid $(E, \rho, [.,.]_E)$. The naive definition of strongly conjugate connections does not work here as replacing the Lie bracket in Equation (\ref{Se13}) with the bracket $[.,.]_E$ creates some problems. For instance, the left-hand side of Equation (\ref{Se13}) is anti-symmetric, so the bracket that will replace the Lie bracket should be symmetric. This is possible when one considers the modified bracket $[.,.]_E^{\nabla}$ with respect to a linear $E$-connection $\nabla$ given some $L$. Therefore, we often should focus on only anti-commutable almost-Leibniz algebroids and consider admissible linear $E$-connections. Yet, as there are two connections in the left-hand side of Equation (\ref{Se13}), it is natural to introduce the following term 
\begin{equation} \frac{1}{2} \left( [u, v]_E^{\nabla} + [u, v]_E^{\nabla^*} \right).
\label{SSe5}
\end{equation}
This term is anti-symmetric as needed and when $L = 0$, as in the usual geometry, it reduces to the original bracket. This leads us to the following definition.

\begin{Definition} A pair of conjugate linear $E$-connections $\nabla$ and $\nabla^*$ are said to be conjugate in the strong sense if they satisfy
\begin{equation} \nabla_u v - \nabla^*_v u = \frac{1}{2} \left( [u, v]_E^{\nabla} + [u, v]_E^{\nabla^*} \right),
\label{SSe6}
\end{equation}
for all $u, v \in \mathfrak{X}(E)$. 
\label{SSd2}
\end{Definition}

With this definition, we can reproduce the results about strong conjugate connections from the manifold framework.

\begin{Proposition} If $\nabla$ and $\nabla^*$ are two admissible strongly conjugate linear $E$-connections, then
\begin{equation} T(\nabla) = - T(\nabla^*).
\label{SSe7}
\end{equation}
\label{SSp1}
\end{Proposition}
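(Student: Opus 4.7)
The plan is to mimic the manifold proof, but carefully track the two different modified brackets $[.,.]_E^{\nabla}$ and $[.,.]_E^{\nabla^*}$ that appear on the right-hand side of the strong conjugacy condition (\ref{SSe6}). The key observation, already recorded in Section (\ref{se2}), is that since both $\nabla$ and $\nabla^*$ are admissible, both modified brackets are anti-symmetric in their two arguments. This is exactly the hypothesis needed to make the swap-and-subtract argument go through.

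Concretely, I would start by writing the defining relation (\ref{SSe6}) for the pair $(u,v)$, then write it again after interchanging $u \leftrightarrow v$:
\begin{equation}
\nabla_v u - \nabla^*_u v = \tfrac{1}{2}\bigl([v,u]_E^{\nabla} + [v,u]_E^{\nabla^*}\bigr).
\label{planeq1}
\end{equation}
Using anti-symmetry of the two modified brackets (admissibility of $\nabla$ and $\nabla^*$), the right-hand side of (\ref{planeq1}) equals $-\tfrac{1}{2}\bigl([u,v]_E^{\nabla} + [u,v]_E^{\nabla^*}\bigr)$. Subtracting (\ref{planeq1}) from (\ref{SSe6}) then yields
\begin{equation}
(\nabla_u v - \nabla_v u) + (\nabla^*_u v - \nabla^*_v u) = [u,v]_E^{\nabla} + [u,v]_E^{\nabla^*}.
\label{planeq2}
\end{equation}

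The final step is to recognise the left-hand side of (\ref{planeq2}) in terms of the $E$-torsion operators. Because both connections are admissible, formula (\ref{eb11}) applies to each of them, giving
\begin{equation}
T(\nabla)(u,v) + T(\nabla^*)(u,v) = (\nabla_u v - \nabla_v u) + (\nabla^*_u v - \nabla^*_v u) - [u,v]_E^{\nabla} - [u,v]_E^{\nabla^*}.
\end{equation}
Substituting (\ref{planeq2}) shows that the right-hand side vanishes identically, so $T(\nabla)(u,v) = -T(\nabla^*)(u,v)$ for all $u,v \in \mathfrak{X}(E)$, which is the claim.

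There is no serious obstacle here; the calculation is essentially algebraic once the right setup is in place. The only subtle point worth stressing is the role of the admissibility hypothesis: without it the modified brackets $[.,.]_E^{\nabla}$ and $[.,.]_E^{\nabla^*}$ need not be anti-symmetric, and the swap in (\ref{planeq1}) would introduce extra symmetrization terms $L(e^a,\nabla_{X_a}u,v)+L(e^a,\nabla_{X_a}v,u)$ (and similarly for $\nabla^*$) that do not cancel. Thus admissibility of both $\nabla$ and $\nabla^*$ is used in an essential way, in direct parallel to how the ordinary Lie-bracket anti-symmetry is used to derive (\ref{Se14}) in the manifold case.
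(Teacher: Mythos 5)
Your argument is correct and follows essentially the same route as the paper: both substitute the strong-conjugacy relation (\ref{SSe6}) with the arguments swapped and use the anti-symmetry of the two modified brackets (i.e., admissibility of both connections) to cancel the bracket terms. The only cosmetic difference is that you add the two torsion operators and show the sum vanishes, whereas the paper substitutes directly into $T(\nabla)(u,v)$ and reads off $-T(\nabla^*)(u,v)$; the content is identical.
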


\begin{proof} 
It can be proven by direct calculation:
\begin{align} 
T(\nabla)(u, v) &= \nabla_u v - \nabla_v u - [u, v]_E^{\nabla} \nonumber\\
&= \left\{ \nabla^*_v u + \frac{1}{2} \left( [u, v]_E^{\nabla} + [u, v]_E^{\nabla^*} \right) \right\} - \left\{ \nabla^*_v u + \frac{1}{2} \left( [v, u]_E^{\nabla} + [v, u]_E^{\nabla^*} \right) \right\} - [u, v]_E^{\nabla} \nonumber\\
&= \nabla^*_v u - \nabla^*_u v - [u, v]_E^{\nabla^*} \nonumber\\
&= - T(\nabla^*)(u, v). \nonumber
\end{align}
The admissibility of both $\nabla$ and $\nabla^*$ is crucial as it implies the anti-symmetry of the modified brackets which is used in the proof.
\end{proof}

\noindent This proposition is implied by the stronger one:

\begin{Proposition} If an admissible linear $E$-connection $\nabla$ has an admissible strong conjugate, then it is $E$-torsion-free.
\label{SSp2}
\end{Proposition}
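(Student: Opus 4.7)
The plan is to imitate the manifold argument for the analogous fact: show that the difference operator $D := \Delta(\nabla, \nabla^*)$ vanishes, so that $\nabla = \nabla^*$, and then read off $T(\nabla) = 0$ directly from the strong conjugate relation.

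First I would extract a $g$-symmetry of $D$ from $g$-conjugation alone. Writing (\ref{SSe1}) once for the pair $(\nabla, \nabla^*)$ and once for the pair $((\nabla^*)^* = \nabla, \nabla^*)$ and subtracting immediately yields
\begin{equation*}
g(D(u,v), w) = g(v, D(u,w)),
\end{equation*}
i.e. $D$ is $g$-symmetric in its last two slots. This step uses only conjugation and $g$-non-degeneracy, not the bracket structure.

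Next I would use the strong conjugation relation (\ref{SSe6}) together with admissibility to produce an explicit formula for $D$. Swapping $u \leftrightarrow v$ in (\ref{SSe6}) and invoking the anti-symmetry of both modified brackets $[.,.]_E^{\nabla}$ and $[.,.]_E^{\nabla^*}$, which is precisely the content of admissibility for $\nabla$ and $\nabla^*$, lets me solve for $\nabla^*_u v$ in terms of $\nabla_v u$ and the two modified brackets. Substituting into $D = \nabla - \nabla^*$ and recognising (\ref{eb11}) yields
\begin{equation*}
D(u,v) = T(\nabla)(u,v) + \frac{1}{2}\bigl([u,v]_E^{\nabla} - [u,v]_E^{\nabla^*}\bigr).
\end{equation*}
Since the $E$-torsion of an admissible connection is anti-symmetric (as recalled in Section \ref{se2}) and each modified bracket is anti-symmetric by admissibility, the tensor $D$ itself is anti-symmetric in $u,v$.

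Combining the $g$-symmetry of $D$ with its anti-symmetry by a standard cyclic chase through the three slots $u, v, w$ forces $g(D(u,v), w) = -g(D(u,v), w)$, and non-degeneracy of $g$ gives $D = 0$. Therefore $\nabla = \nabla^*$; the two modified brackets then coincide, and (\ref{SSe6}) reduces to $\nabla_u v - \nabla_v u = [u,v]_E^{\nabla}$, which is exactly $T(\nabla) = 0$. The main obstacle, and the reason admissibility is essential, lies in the third step: without admissibility of both connections the $D$ we construct need not be anti-symmetric, and the cyclic chase ceases to close; as a by-product the argument also shows that such a $\nabla$ is automatically $E$-metric-$g$-compatible, i.e. an $E$-Levi-Civita connection.
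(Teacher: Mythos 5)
Your proof is correct, but it takes a genuinely different route from the paper's. The paper proves this proposition \emph{before} (and independently of) Proposition (\ref{SSp4}): it substitutes the strong-conjugation relation (\ref{SSe6}) into the $g$-conjugation identity (\ref{SSe1}) to obtain a single three-slot identity involving $g(T(\nabla)(\cdot,\cdot),\cdot)$, and then isolates $-2g(T(\nabla)(u,v),w)=0$ by subtracting cyclic permutations; the Levi-Civita statement of Proposition (\ref{SSp4}) is then derived afterwards \emph{using} this torsion-freeness. You invert that logical order: you first establish the explicit formula
\begin{equation*}
\Delta(\nabla,\nabla^*)(u,v) = T(\nabla)(u,v) + \tfrac{1}{2}\bigl([u,v]_E^{\nabla} - [u,v]_E^{\nabla^*}\bigr),
\end{equation*}
deduce that $\Delta(\nabla,\nabla^*)$ is anti-symmetric (using admissibility twice: for the brackets and for $T(\nabla)$), combine this with the $g$-symmetry in the last two slots coming from Proposition (\ref{SSp3}) to force $\Delta(\nabla,\nabla^*)=0$, and only then read off $T(\nabla)=0$ from (\ref{SSe6}) with $\nabla=\nabla^*$. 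This is essentially the paper's proof of Proposition (\ref{SSp4}) run first, with torsion-freeness extracted at the end rather than invoked as an input; as you note, it delivers the stronger $E$-Levi-Civita conclusion in one pass. The trade-off is that the paper's cyclic-permutation argument gets the torsion statement more directly, without needing to know that the two connections coincide. Both arguments invoke admissibility at the same essential point, namely the anti-symmetry of the modified brackets, so your identification of where the hypothesis is used matches the paper's.
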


\begin{proof} 
The strong conjugate condition (\ref{SSe6}) implies that 
\begin{equation} \rho(g(u, v)) = g(\nabla_w u, v) + g \left( u, \nabla_v w - \frac{1}{2} \left( [v, w]_E^{\nabla} + [v, w]_E^{\nabla^*} \right) \right). \nonumber
\end{equation}
Using the symmetry of the $E$-metric $g$ and the anti-symmetry of the modified brackets for admissible linear $E$-connections, this leads to
\begin{equation} g(T(\nabla)(w, u), v) + g(T(\nabla)(v, w), u) = \frac{1}{2} \left\{ g \left( [v, w]_E^{\nabla} - [v, w]_E^{\nabla^*}, u \right) - g \left( [u, w]_E^{\nabla} - [u, w]_E^{\nabla^*}, v \right) \right\}. \nonumber
\end{equation}
Subtracting the cyclic permutations from this equation yields
\begin{equation} - 2g(T(\nabla)(u, v), w) = 0, \nonumber
\end{equation}
which gives the $E$-torsion-freeness due to the non-degeneracy of the $E$-metric $g$.
\end{proof}

As we see, the admissibility of two conjugate linear $E$-connections are sometimes necessary for proving analogous results. Being admissible for both $\nabla$ and $\nabla^*$ necessitates the following condition:
\begin{equation} L(e^a, \Delta(\nabla, \nabla^*)(X_a, u), v) = - L(e^a, \Delta(\nabla, \nabla^*)(X_a, v), u),
\label{SSe8}
\end{equation}
for all $u, v \in \mathfrak{X}(E)$.

As in the usual case, one can evaluate the $E$-non-metricity tensors of two conjugate linear $E$-connections $\nabla$ and $\nabla^*$ in terms of their difference operator.

\begin{Proposition} For a linear $E$-connection $\nabla$ and its conjugate $\nabla^*$, the $E$-non-metricity tensors satisfy
\begin{align} 
Q(\nabla, g)(u, v, w) &= - Q(\nabla^*, g)(u, v, w) \nonumber\\
&= g \left( \Delta(\nabla^*, \nabla)(u, v), w \right),
\label{SSe9}
\end{align}
for all $u, v, w \in \mathfrak{X}(E)$.
\label{SSp3}
\end{Proposition}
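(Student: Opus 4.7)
The plan is to reduce the statement to the definition of the conjugate $E$-connection by expanding $Q(\nabla, g) = \nabla g$ via the Leibniz rule and then isolating the difference operator. Starting from
\begin{equation*}
Q(\nabla, g)(u, v, w) = (\nabla_u g)(v, w) = \rho(u)(g(v, w)) - g(\nabla_u v, w) - g(v, \nabla_u w),
\end{equation*}
I would feed in the defining relation $\rho(u)(g(v, w)) = g(\nabla_u v, w) + g(v, \nabla^*_u w)$ from Equation (\ref{SSe1}); the $g(\nabla_u v, w)$ terms cancel, leaving $g(v, \nabla^*_u w - \nabla_u w) = g(v, \Delta(\nabla^*, \nabla)(u, w))$.

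Next I would pass from $g(v, \Delta(\nabla^*, \nabla)(u, w))$ to $g(\Delta(\nabla^*, \nabla)(u, v), w)$, which is what the proposition asks for. This is the only nontrivial bookkeeping step: it follows from the symmetry of $g$ combined with the fact that $\nabla_u g$ is itself symmetric in its two covariant slots. The latter is an immediate consequence of the symmetry of $g$ and the Leibniz rule applied in the expansion above, so it requires no additional structure.

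For the first equality $Q(\nabla, g) = -Q(\nabla^*, g)$, I would repeat the same computation with $\nabla$ and $\nabla^*$ exchanged, using the involutivity $(\nabla^*)^* = \nabla$ of the conjugation (noted below Equation (\ref{SSe3})) so that $\rho(u)(g(v, w)) = g(\nabla^*_u v, w) + g(v, \nabla_u w)$. This yields $Q(\nabla^*, g)(u, v, w) = g(v, \Delta(\nabla, \nabla^*)(u, w))$, which is the negative of the previous expression since $\Delta(\nabla, \nabla^*) = -\Delta(\nabla^*, \nabla)$ directly from definition (\ref{Se6}).

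I do not expect any real obstacle in this argument: no bracket, no admissibility, and no algebroid axiom beyond those used to define $\nabla$, $\nabla^*$, and $g$ is needed. The only subtlety is the symmetry swap of the last two slots, which works because the conjugation condition was formulated on the level of the whole $E$-metric rather than just on one slot, so no additional anchored-bundle hypothesis is required.
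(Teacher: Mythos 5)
Your proposal is correct and follows essentially the same route as the paper: expand $Q(\nabla,g)=\nabla g$ by the Leibniz rule, substitute the conjugation relation (\ref{SSe1}), and use the symmetry of $g$ to land on $g(\Delta(\nabla^*,\nabla)(u,v),w)$. The only difference is cosmetic — the paper applies (\ref{SSe1}) with the last two slots already swapped, whereas you substitute it verbatim and then swap slots using the symmetry of $\nabla_u g$; both steps rest on exactly the same facts.
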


\begin{proof}
The $E$-non-metricity tensor $Q(\nabla, g)$ is given by
\begin{equation} Q(\nabla, g)(u, v, w) = \rho(u)(g(v, w)) - g(\nabla_u v, w) - g(v, \nabla_u w). \nonumber\\
\end{equation}
The conjugation relation (\ref{SSe1}) yields
\begin{equation} \rho(u)(g(v, w)) - g(v, \nabla_u w) = g(\nabla^*_u v, w), \nonumber\\
\end{equation}
so that the $E$-non-metricity tensor becomes
\begin{equation} Q(\nabla, g)(u, v, w) = g(\nabla^*_u v, w) - g(\nabla_u v, w) = g(\Delta(\nabla^*, \nabla)(u, v), w), \nonumber\\
\end{equation}
which is the desired result.
\end{proof}

\begin{Proposition} If an admissible linear $E$-connection $\nabla$ has an admissible strong conjugate, then it is an $E$-Levi-Civita connection.
\label{SSp4}
\end{Proposition}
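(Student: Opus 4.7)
The goal is to establish both ingredients of the $E$-Levi-Civita property: $E$-torsion-freeness and $E$-metric-$g$-compatibility. The first is already in hand from Proposition \ref{SSp2}: since $\nabla$ is admissible and possesses an admissible strong conjugate $\nabla^*$, the proposition gives $T(\nabla) = 0$ (and, invoking Proposition \ref{SSp1}, also $T(\nabla^*) = 0$). Hence the remaining task is to prove $E$-metric-$g$-compatibility, which by Proposition \ref{SSp3} is equivalent to showing that the difference operator $\Delta(\nabla, \nabla^*)$ vanishes identically.

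The plan is to extract two independent symmetry constraints on the tensor
\begin{equation}
D(u, v, w) := g\bigl( \Delta(\nabla, \nabla^*)(u, v), w \bigr), \nonumber
\end{equation}
and then run the standard ``mixed symmetries on a $3$-tensor'' argument. First, I would exploit the strong conjugate relation (\ref{SSe6}) together with the admissibility of both connections. Writing (\ref{SSe6}) once as stated and once with $u$ and $v$ interchanged, and using the fact that the modified brackets $[.,.]_E^{\nabla}$ and $[.,.]_E^{\nabla^*}$ are anti-symmetric (because $\nabla$ and $\nabla^*$ are admissible), the right-hand sides cancel upon addition. This forces $\Delta(\nabla, \nabla^*)(u, v) + \Delta(\nabla, \nabla^*)(v, u) = 0$, i.e.\ $D$ is anti-symmetric in its first two slots.

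Second, the non-metricity tensor $Q(\nabla, g)(u, v, w) = \rho(u)(g(v, w)) - g(\nabla_u v, w) - g(v, \nabla_u w)$ is manifestly symmetric in $(v, w)$ by the symmetry of $g$. Combining this with the identification $Q(\nabla, g)(u, v, w) = -g(\Delta(\nabla, \nabla^*)(u, v), w) = -D(u, v, w)$ from Proposition \ref{SSp3}, one reads off that $D$ is symmetric in its last two slots. A tensor that is anti-symmetric in $(u, v)$ and symmetric in $(v, w)$ must vanish identically (the standard chain $D(u, v, w) = D(u, w, v) = -D(w, u, v) = -D(w, v, u) = D(v, w, u) = D(v, u, w) = -D(u, v, w)$ forces $D = 0$). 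The non-degeneracy of $g$ then yields $\Delta(\nabla, \nabla^*) = 0$, hence $\nabla = \nabla^*$, which by the remark following (\ref{SSe3}) is exactly the $E$-metric-$g$-compatibility of $\nabla$. Together with torsion-freeness, this completes the proof.

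The only delicate point is the first symmetry argument: it relies essentially on the anti-symmetry of the two modified brackets, which is precisely where the admissibility hypotheses on both $\nabla$ and $\nabla^*$ enter. Without admissibility, the right-hand side of (\ref{SSe6}) would not be anti-symmetric in $(u, v)$, the cancellation would fail, and $\Delta(\nabla, \nabla^*)$ would not be forced to be anti-symmetric, blocking the entire $S_3$-style combinatorial step. Everything else is a straightforward invocation of the preceding propositions.
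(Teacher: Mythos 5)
Your proposal is correct and follows essentially the same route as the paper: both extract the anti-symmetry of $\Delta(\nabla, \nabla^*)$ in its first two arguments from the strong-conjugacy relation plus admissibility, combine it via Proposition \ref{SSp3} with the symmetry of $Q(\nabla, g)$ in its last two arguments to force the non-metricity to vanish, and then invoke Proposition \ref{SSp2} for $E$-torsion-freeness. The only cosmetic difference is that you phrase the conclusion as $\Delta(\nabla, \nabla^*) = 0$ and hence $\nabla = \nabla^*$, while the paper states $Q(\nabla, g) = 0$ directly; these are equivalent.
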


\begin{proof} The symmetric part of the difference operator of strongly conjugate linear $E$-connections $\nabla$ and $\nabla^*$ is equal to
\begin{equation} \Delta(\nabla, \nabla^*)(u, v) + \Delta(\nabla, \nabla^*)(v, u) = \frac{1}{2} \left( [u, v]_E^{\nabla} + [u, v]_E^{\nabla^*} \right) + \frac{1}{2} \left( [v, u]_E^{\nabla} + [v, u]_E^{\nabla^*} \right). \nonumber
\end{equation}
Hence, if $\nabla$ and $\nabla^*$ are admissible then the difference operator is anti-symmetric. By Proposition (\ref{SSp3}), the $E$-non-metricity tensor is also anti-symmetric in the first two components:
\begin{equation} Q(\nabla, g)(u, v, w) = - Q(\nabla, g)(v, u, w), \nonumber
\end{equation}
for all $u, v, w \in \mathfrak{X}(E)$. Since $Q(\nabla, g)$ is symmetric in the last two components, this anti-symmetry forces $Q(\nabla, g)$ to be 0. Hence it is $E$-metric-$g$-compatible. By Proposition (\ref{SSp2}), it is also $E$-torsion-free. Hence, $\nabla = \nabla^*$ is an $E$-Levi-Civita connection.
\end{proof}

In the manifold setting, Proposition (\ref{SSp3}) implies the total symmetry of $Q(\nabla, g)$ when $\nabla$ is torsion-free. This is not valid for the algebroid case as the difference operator of any two $E$-torsion-free linear $E$-connections $\nabla$ and $\nabla'$ satisfies
\begin{align} 
\Delta(\nabla, \nabla')(u, v) - \Delta(\nabla, \nabla')(v, u) &= [u, v]_E^{\nabla} - [u, v]_E^{\nabla'} \nonumber\\
&= L(e^a, \Delta(\nabla, \nabla')(X_a, u), v),
\label{SSe10}
\end{align}
for all $u, v \in \mathfrak{X}(E)$. One can instead show that the difference operator of any two linear $E$-connections $\nabla$ and $\nabla'$ is symmetric if and only if 
\begin{equation} T(\nabla)(u, v) - T(\nabla')(u, v) = [u, v]_E^{\nabla'} - [u, v]_E^{\nabla}.
\label{SSe11}
\end{equation}
This fact forces one to generalize statistical structures by using two geometrical objects that correspond to both $E$-non-metricity and $E$-torsion tensors because one can not have conjugate linear $E$-connections with the same $E$-torsion while their $E$-non-metricity tensors are totally symmetric.

\begin{Definition} On an almost-Leibniz algebroid $(E, \rho, [.,.]_E)$, the triplet $(g, C, B)$ is called an $E$-statistical structure if $g$ is an $E$-metric, $C$ is a totally symmetric $(0, 3)$-type $E$-tensor and $B: \mathfrak{X}(E) \times \mathfrak{X}(E) \to \mathfrak{X}(E)$ is an anti-symmetric $C^{\infty}(M, \mathbb{R})$-bilinear map.
\label{SSd3}
\end{Definition}
One can prove the equivalence of conjugate $E$-connection structures and $E$-statistical structures provided they satisfy the necessary conditions which will be explained shortly. Given a conjugate $E$-connection structure $(g, \nabla, \nabla^*)$, if
\begin{equation} T(\nabla)(u, v) = T(\nabla^*)(u, v) + [u, v]_E^{\nabla^*} - [u, v]_E^{\nabla},
\label{SSe12}
\end{equation}
then the $E$-non-metricity tensors are totally symmetric. Hence, one can choose
\begin{equation} C := - Q(\nabla, g).
\label{SSe13}
\end{equation}
Moreover, if one assume that $\nabla$ is $E$-torsion-free, then one can choose
\begin{equation} B := T(\nabla^*).
\label{SSe14}
\end{equation}
If $\nabla^*$ is admissible, then $T(\nabla^*)$ is anti-symmetric. Therefore, we prove that given a conjugate $E$-connection structure $(g, \nabla, \nabla^*)$, with admissible linear $E$-connections such that 
\begin{equation}
T(\nabla^*)(u, v) = [u, v]_E^{\nabla} - [u, v]_E^{\nabla^*},
\label{SSe15}
\end{equation}
one can construct an $E$-statistical structure $(g, C, B)$ by the identifications (\ref{SSe13}) and (\ref{SSe14}). Conversely, given an $E$-statistical structure $(g, C, B)$, one can look for two admissible linear $E$-connections $\nabla$ and $\nabla^*$ satisfying
\begin{align}
&2 g(\nabla_u v, w) = 2 g(^g \widetilde{\nabla}_u v, w) - C(u, v, w), \nonumber\\
&2 g(\nabla^*_u v, w) = 2 g(^g \widetilde{\nabla^*}_u v, w) + C(u, v, w) - g(B(v, w), u) - g(B(u, w), v) + g(B(u, v), w),
\label{SSe16}
\end{align}
for all $u, v, w \in \mathfrak{X}(E)$. Here, $^g \widetilde{\nabla}$ and $^g \widetilde{\nabla^*}$ are the unique $E$-Levi-Civita connections on the almost-Lie algebroids $(E, \rho, [.,.]_E^{\nabla})$ and $(E, \rho, [.,.]_E^{\nabla^*})$, respectively. Note that both left- and right-hand sides of these equations depend on $\nabla$ and $\nabla^*$, so they are not constructive. Instead, one has to solve these equations for the linear $E$-connections. We assume that such solutions exist. 

\begin{Proposition} The linear $E$-connections $\nabla$ and $\nabla^*$ satisfying (\ref{SSe16}) are conjugates with respect to $g$ provided that 
\begin{equation} B(u, v) = [u, v]_E^{\nabla} - [u, v]_E^{\nabla^*}.
\label{SSe17}
\end{equation}
Moreover, they satisfy
\begin{align} 
&Q(\nabla, g) = - Q(\nabla^*, g) = - C, \nonumber\\ 
&T(\nabla) = 0, \nonumber\\
&T(\nabla^*) = B.
\label{SSe18}
\end{align}
\label{SSp5}
\end{Proposition}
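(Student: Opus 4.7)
The plan is to verify each of the four conclusions by substituting the defining relations (\ref{SSe16}) into the appropriate identities and simplifying with the modified Koszul formula (\ref{eb17}) applied to ${}^g \widetilde{\nabla}$ and ${}^g \widetilde{\nabla^*}$. A key preliminary observation is that the triplets $(E, \rho, [.,.]_E^{\nabla})$ and $(E, \rho, [.,.]_E^{\nabla^*})$ are almost-Lie algebroids, so the modified brackets appearing inside the Koszul formula for these two Levi-Civita connections simply coincide with $[.,.]_E^{\nabla}$ and $[.,.]_E^{\nabla^*}$ respectively. Symmetrizing (\ref{eb17}) in the last two slots then shows that ${}^g \widetilde{\nabla}$ is $E$-metric-$g$-compatible, while anti-symmetrizing in the first two slots gives ${}^g \widetilde{\nabla}_u v - {}^g \widetilde{\nabla}_v u = [u, v]_E^{\nabla}$; analogous identities hold for ${}^g \widetilde{\nabla^*}$ with $[.,.]_E^{\nabla^*}$.

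For the conjugacy claim (\ref{SSe1}), I would form $2 g(\nabla_u v, w) + 2 g(v, \nabla^*_u w)$ from (\ref{SSe16}) and expand $2 g({}^g \widetilde{\nabla^*}_u w, v)$ via (\ref{eb17}) after swapping $v \leftrightarrow w$ in its statement. The two Koszul expansions combine to $2 \rho(u)(g(v, w))$ plus three residual contributions, each a difference of the form $g([\,\cdot\,,\,\cdot\,]_E^{\nabla} - [\,\cdot\,,\,\cdot\,]_E^{\nabla^*}, \cdot)$. The hypothesis $B(u, v) = [u, v]_E^{\nabla} - [u, v]_E^{\nabla^*}$ rewrites these differences as values of $B$, which then cancel precisely against the explicit $B$-terms coming from the second line of (\ref{SSe16}) once the anti-symmetry of $B$ is used. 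The $C$-contributions drop out by the total symmetry of $C$.

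The torsion statements follow by anti-symmetrizing the two equations in (\ref{SSe16}) over $(u, v)$. In both cases the $C$-contributions vanish by total symmetry. For $\nabla$, the Levi-Civita difference reduces to $[u, v]_E^{\nabla}$, so (\ref{eb11}) forces $T(\nabla) = 0$. For $\nabla^*$, four of the six $B$-terms cancel pairwise while the remaining two combine, via the anti-symmetry of $B$, into $2 g(B(u, v), w)$, yielding $T(\nabla^*) = B$. The non-metricity relation is obtained by computing $(\nabla_u g)(v, w) = \rho(u)(g(v, w)) - g(\nabla_u v, w) - g(v, \nabla_u w)$ directly; the $E$-metric-$g$-compatibility of ${}^g \widetilde{\nabla}$ annihilates the anchor term, and what remains collapses via the total symmetry of $C$ to the stated $-C$. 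The companion identity $Q(\nabla^*, g) = - Q(\nabla, g)$ is then immediate from Proposition (\ref{SSp3}) once conjugacy has been established.

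The principal obstacle is the bookkeeping in the conjugacy step, where one must simultaneously invoke the total symmetry of $C$, the anti-symmetry of $B$, and the anti-symmetry of both modified brackets $[.,.]_E^{\nabla}$ and $[.,.]_E^{\nabla^*}$ (the latter requiring the admissibility of \emph{both} $\nabla$ and $\nabla^*$); dropping any one of these hypotheses breaks the cancellation. A secondary subtlety is that (\ref{SSe16}) is implicit in $\nabla$ and $\nabla^*$ through the modified brackets hidden inside ${}^g \widetilde{\nabla}$ and ${}^g \widetilde{\nabla^*}$, so throughout the argument one treats these auxiliary Levi-Civita connections as a given solution rather than trying to produce them constructively.
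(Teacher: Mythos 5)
Your proposal is correct and follows essentially the same route as the paper: conjugacy is obtained by adding the two Koszul expansions (with $v \leftrightarrow w$ swapped in the second one) so that the residual differences $g([\cdot,\cdot]_E^{\nabla} - [\cdot,\cdot]_E^{\nabla^*}, \cdot)$ cancel against the $B$-terms via (\ref{SSe17}), exactly as in the paper's proof, and your derivation of the torsion and non-metricity identities by direct (anti-)symmetrization of (\ref{SSe16}) is just a more self-contained version of the paper's appeal to the decomposition formula quoted from \cite{10}. One caveat: the direct computation of $(\nabla_u g)(v,w)$ that you describe actually yields $Q(\nabla, g) = +C$, not $-C$ --- the metric-compatibility of ${}^g\widetilde{\nabla}$ kills the anchor term and leaves $\tfrac{1}{2}C(u,v,w) + \tfrac{1}{2}C(u,w,v) = C(u,v,w)$ --- but the same sign slip occurs in the paper's own proof (its displayed relation between $C$ and $Q(\nabla,g)$ is off by a sign relative to its own decomposition formula), so this is an inherited convention inconsistency rather than a structural flaw in your argument.
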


\begin{proof}
In Ref. \cite{10}, we prove that for any admissible linear $E$-connection $\nabla$, we have the decomposition
\begin{align} 
2 g(\nabla_u v, w) = & \ 2 g(^g \widetilde{\nabla}_u v, w) \nonumber \\
& - Q(\nabla, g)(u, v, w) - Q(\nabla, g)(v, u, w) + Q(\nabla, g)(w, u, v)  \nonumber\\
& - g(T(\nabla)(v, w), u) - g(T(\nabla)(u, w), v) + g(T(\nabla)(u, v), w). \nonumber
\end{align}
This yields
\begin{equation} C(u, v, w) = - Q(\nabla, g)(u, v, w) - Q(\nabla, g)(v, u, w) + Q(\nabla, g)(w, u, v), \nonumber
\end{equation}
which by the total symmetry of $C$ forces to have
\begin{equation} C = - Q(\nabla, g) = Q(\nabla^*, g). \nonumber
\end{equation}
By comparing Equations (\ref{SSe16}) with the above decomposition, we can immediately conclude that 
\begin{equation} T(\nabla) = 0, \qquad T(\nabla^*) = B. \nonumber
\end{equation}
Now, we need to prove that they are indeed conjugate. In order to do that first we focus on the $E$-Levi-Civita connections $^g \widetilde{\nabla}$ and $^g \widetilde{\nabla^*}$. They are given by the Koszul formula with their corresponding modified brackets. Hence, we have
\begin{align} 
2 g(^g \widetilde{\nabla}_u v, w) = & \ \rho(u)(g(v, w)) + \rho(v)(g(u, w)) - \rho(w)(g(u, v)) \nonumber\\
												& - g([v, w]_E^{\nabla}, u) - g([u, w]_E^{\nabla}, v) + g([u, v]_E^{\nabla}, w), \nonumber
\end{align}
and 
\begin{align} 
2 g(^g \widetilde{\nabla^*}_u v, w) = & \ \rho(u)(g(v, w)) + \rho(v)(g(u, w)) - \rho(w)(g(u, v)) \nonumber\\
												& - g([v, w]_E^{\nabla^*}, u) - g([u, w]_E^{\nabla^*}, v) + g([u, v]_E^{\nabla^*}, w). \nonumber
\end{align}
Adding these two terms together we get 
\begin{align} 
2 g(^g \widetilde{\nabla}_u v, w) + 2 g(^g \widetilde{\nabla^*}_u v, w) &= 2 \rho(u)(g(v, w)) - g([v, w]^{\nabla} - [v, w]^{\nabla^*}, u) \nonumber\\
& \quad - g([u, w]^{\nabla} - [u, w]^{\nabla^*}, v) + g([u, v]^{\nabla} - [u, v]^{\nabla^*}, w). \nonumber
\end{align}
Note that the extra terms next to $2 \rho(u)(g(v, w))$ cancel the terms coming from the operator $B$ because of the assumption (\ref{SSe17}). Hence, we conclude 
\begin{equation} 2 g(\nabla_u v, w) + 2 g(v, \nabla^*_u w) = 2 \rho(u)(g(v, w)), \nonumber
\end{equation}
which implies that $\nabla$ and $\nabla^*$ are conjugate.
\end{proof}
Note that the crucial point here is the difference of $E$-torsion tensors. Instead of $E$-torsion-free $\nabla$, we could have introduced an $E$-torsionful linear $E$-connection if the difference of their $E$-torsion tensors is given by the difference of their modified bracket as in (\ref{SSe17}). Moreover, the difference between these $E$-torsion tensors guarantees that the difference $E$-tensor $\Delta(\nabla, \nabla^*)$ is symmetric, so that the $E$-non-metricity tensor $Q(\nabla, g)$ is totally symmetric by Proposition (\ref{SSp3}), which is coherent with the fact that $C$ is totally symmetric.  

Another way to incorporate $E$-torsion into the picture is to generalize quasi-statistical structures.

\begin{Definition} On an almost-Leibniz algebroid $(E, \rho, [.,.]_E)$, a doublet $(g, \nabla)$ is called an $E$-quasi-statistical structure if $g$ is an $E$-metric, $\nabla$ is a linear $E$-connection satisfying
\begin{equation} Q(\nabla, g)(u, v, w) = Q(\nabla, g)(v, u, w) - g(T(\nabla)(u, v), w),
\label{SSe19}
\end{equation}
for all $u, v, w \in \mathfrak{X}(E)$.
\label{SSd4}
\end{Definition}
As in the usual case, the property (\ref{SSe19}) fixes the $E$-torsion of the conjugate:

\begin{Proposition} If $(g, \nabla)$ is an $E$-quasi-statistical structure, then
\begin{equation} T(\nabla^*)(u, v) = [u, v]_E^{\nabla} - [u, v]_E^{\nabla^*},
\label{SSe20}
\end{equation}
for all $u, v \in \mathfrak{X}(E)$.
\label{SSp6}
\end{Proposition}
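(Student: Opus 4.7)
The plan is to extract the information about $\nabla^*$ from the quasi-statistical condition by using Proposition (\ref{SSp3}) to translate $Q(\nabla,g)$ into the difference operator $\Delta(\nabla^*,\nabla)$, and then to antisymmetrize in the first two slots. This converts the quasi-statistical identity into an identity purely about the symmetric/antisymmetric parts of $\Delta(\nabla^*,\nabla)$, which in turn encodes the difference of $E$-torsion operators and modified brackets.

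More concretely, I would first invoke Proposition (\ref{SSp3}) to rewrite the quasi-statistical condition (\ref{SSe19}) as
\begin{equation}
g\bigl(\Delta(\nabla^*,\nabla)(u,v) - \Delta(\nabla^*,\nabla)(v,u),\,w\bigr) = -\,g(T(\nabla)(u,v),w). \nonumber
\end{equation}
Since the $E$-metric $g$ is non-degenerate and $w\in\mathfrak{X}(E)$ is arbitrary, I would then cancel $w$ to obtain
\begin{equation}
\Delta(\nabla^*,\nabla)(u,v) - \Delta(\nabla^*,\nabla)(v,u) = -\,T(\nabla)(u,v). \nonumber
\end{equation}

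Next I would expand both sides using the definition $\Delta(\nabla^*,\nabla)(u,v) = \nabla^*_u v - \nabla_u v$ together with the modified-bracket form of the $E$-torsion operator (\ref{eb11}), namely $T(\nabla)(u,v) = \nabla_u v - \nabla_v u - [u,v]_E^{\nabla}$. The $\nabla$-terms cancel on the two sides, leaving
\begin{equation}
\nabla^*_u v - \nabla^*_v u = [u,v]_E^{\nabla}. \nonumber
\end{equation}
Finally, rewriting the left-hand side using (\ref{eb11}) for $\nabla^*$, that is $\nabla^*_u v - \nabla^*_v u = T(\nabla^*)(u,v) + [u,v]_E^{\nabla^*}$, and solving for $T(\nabla^*)(u,v)$ yields exactly (\ref{SSe20}).

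There is no real obstacle here; the proof is essentially a three-line algebraic manipulation once the correct substitution is made. The only subtle point worth emphasizing in the write-up is that one does \emph{not} need any admissibility hypothesis on $\nabla$ or $\nabla^*$ for this computation, since Proposition (\ref{SSp3}) and the definitions of $T(\nabla)$ and $T(\nabla^*)$ hold for arbitrary linear $E$-connections. The non-degeneracy of $g$ is used in exactly one step, namely to strip off the test vector $w$.
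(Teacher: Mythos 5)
Your proof is correct and is essentially the paper's argument: both use the conjugation relation together with the quasi-statistical condition (\ref{SSe19}) and the non-degeneracy of $g$ to isolate $T(\nabla^*)$. The only difference is cosmetic --- you route the conjugation relation through Proposition (\ref{SSp3}) and the difference operator $\Delta(\nabla^*,\nabla)$, whereas the paper expands $g(\nabla^*_u v, w)$ directly; your remark that no admissibility hypothesis is needed is likewise consistent with the paper, which states the result without one.
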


\begin{proof} 
The conjugation relation (\ref{SSe1}) implies that
\begin{equation} g(\nabla^*_u v, w) = \rho(u)(g(v, w)) - g(\nabla_u w, v), \nonumber
\end{equation}
which yields
\begin{align} 
g(T(\nabla^*)(u, v) + [u, v]_E^{\nabla^*}, w) &= \rho(u)(g(v, w)) - \rho(v)(g(u, w)) - g(\nabla_u w, v) + g(\nabla_v w, u) \nonumber\\
&= g([u, v]_E^{\nabla}, w), \nonumber
\end{align}
where for the last equality the defining equation (\ref{SSe19}) is used. The non-degeneracy of the $E$-metric $g$ gives the desired result.
\end{proof}

\noindent Quasi-statistical structures in the generalized geometry framework with small differences can be found in \cite{19}.

Similarly to the strong conjugation condition, one needs two modify the naive generalization of the relative torsion (\ref{Se15}).

\begin{Definition} The relative $E$-torsion operator of two linear $E$-connection $\nabla$ and $\nabla'$ is a map $T(\nabla, \nabla'): \mathfrak{X}(E) \times \mathfrak{X}(E) \to \mathfrak{X}(E)$ defined as
\begin{equation} T(\nabla, \nabla')(u, v) := \nabla_u v - \nabla'_v u - \frac{1}{2} \left( [u, v]_E^{\nabla} + [u, v]_E^{\nabla'} \right).
\label{SSe21}
\end{equation}
\label{SSd5}
\end{Definition}
\noindent One can explicitly show that this is $C^{\infty}(M, \mathbb{R})$-bilinear, so that one can define an $E$-tensor. For two conjugate linear $E$-connections $\nabla$ and $\nabla^*$, being strongly conjugate is equivalent to having a vanishing relative $E$-torsion operator.

\begin{Proposition} If $\nabla$ and $\nabla^*$ are two admissible linear $E$-connections that are conjugate to each other, the relative $E$-torsion operator satisfies
\begin{equation} T(\nabla, \nabla^*)(u, v) = - T(\nabla^*, \nabla)(v, u),
\label{SSe22}
\end{equation}
for all $u, v \in \mathfrak{X}(E)$.
\label{SSp7}
\end{Proposition}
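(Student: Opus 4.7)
The plan is to prove the identity by a direct algebraic manipulation from Definition \ref{SSd5}, with the admissibility hypothesis entering only at the very last step through the antisymmetry of the modified brackets. Note that the conjugation hypothesis is not actually needed for this particular identity; it is kept in the statement because the relative $E$-torsion is primarily of interest in that setting.

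First, I would expand $T(\nabla, \nabla^*)(u,v)$ and $T(\nabla^*, \nabla)(v,u)$ using Equation (\ref{SSe21}):
\begin{align}
T(\nabla, \nabla^*)(u,v) &= \nabla_u v - \nabla^*_v u - \tfrac{1}{2}\bigl([u,v]_E^{\nabla} + [u,v]_E^{\nabla^*}\bigr), \nonumber\\
T(\nabla^*, \nabla)(v,u) &= \nabla^*_v u - \nabla_u v - \tfrac{1}{2}\bigl([v,u]_E^{\nabla^*} + [v,u]_E^{\nabla}\bigr). \nonumber
\end{align}
Adding these two expressions, the covariant-derivative contributions $\nabla_u v$ and $\nabla^*_v u$ cancel pairwise, leaving only
\begin{equation}
T(\nabla, \nabla^*)(u,v) + T(\nabla^*, \nabla)(v,u) = -\tfrac{1}{2}\bigl([u,v]_E^{\nabla} + [v,u]_E^{\nabla}\bigr) - \tfrac{1}{2}\bigl([u,v]_E^{\nabla^*} + [v,u]_E^{\nabla^*}\bigr). \nonumber
\end{equation}

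At this point the admissibility of $\nabla$ and $\nabla^*$ enters. As recalled in Section \ref{se2} after Equation (\ref{eb11}), the modified bracket $[.,.]_E^{\nabla'}$ of an admissible linear $E$-connection $\nabla'$ is antisymmetric. Applying this to both $\nabla$ and $\nabla^*$ makes each of the two symmetric combinations $[u,v]_E^{\nabla} + [v,u]_E^{\nabla}$ and $[u,v]_E^{\nabla^*} + [v,u]_E^{\nabla^*}$ vanish, so the right-hand side is zero and the desired identity follows.

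There is no real obstacle here: the result is a routine consequence of the definition once the antisymmetry of the modified brackets is invoked. The only subtle point is the bookkeeping of the two different brackets $[.,.]_E^{\nabla}$ and $[.,.]_E^{\nabla^*}$, which enter symmetrically in Definition \ref{SSd5} precisely so that this cancellation works for admissible pairs; without admissibility of \emph{both} connections, one would be left with a nontrivial symmetric part of one of the brackets and the identity would fail.
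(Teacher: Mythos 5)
Your proposal is correct and follows essentially the same route as the paper: a direct expansion from Definition (\ref{SSd5}) followed by the anti-symmetry of both modified brackets, which is exactly where admissibility enters. Your observation that the conjugation hypothesis is not actually used is also accurate, and your version of the computation is if anything cleaner than the paper's (whose displayed middle line contains an apparent typo).
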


\begin{proof}
It can be proven by direct calculation:
\begin{align} 
T(\nabla, \nabla^*)(u, v) &= \nabla_u v - \nabla^*_v u - \frac{1}{2} \left( [u, v]_E^{\nabla} + [u, v]_E^{\nabla'} \right) \nonumber\\
&= - \left\{ \nabla_u v - \nabla^*_v u - \frac{1}{2} \left( [u, v]_E^{\nabla} + [u, v]_E^{\nabla'} \right) \right\} \nonumber\\
&= - T(\nabla^*, \nabla)(v, u), \nonumber
\end{align}
where the anti-symmetry of the both modified brackets is used.
\end{proof}

\begin{Proposition} If $\nabla$ and $\nabla^*$ are two linear $E$-connections (not necessarily admissible) that are conjugate to each other, then
\begin{equation} T(\nabla, \nabla^*) + T(\nabla^*, \nabla) = T(\nabla) + T(\nabla^*).
\label{SSe23}
\end{equation}
\label{SSp8}
\end{Proposition}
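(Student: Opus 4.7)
The plan is a one-shot direct calculation from the definition of the relative $E$-torsion operator in Definition (\ref{SSd5}); no appeal to conjugation, admissibility, or the structure of the bracket is really needed, so I expect the proof to be essentially bookkeeping.

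Concretely, I would first write out the two relative $E$-torsion operators in the claimed identity: by definition,
\begin{align*}
T(\nabla,\nabla^*)(u,v) &= \nabla_u v - \nabla^*_v u - \tfrac{1}{2}\bigl([u,v]_E^{\nabla} + [u,v]_E^{\nabla^*}\bigr),\\
T(\nabla^*,\nabla)(u,v) &= \nabla^*_u v - \nabla_v u - \tfrac{1}{2}\bigl([u,v]_E^{\nabla^*} + [u,v]_E^{\nabla}\bigr).
\end{align*}
Then I would simply add the two expressions. The pair of $\tfrac{1}{2}$-weighted modified brackets combine into the full $[u,v]_E^{\nabla} + [u,v]_E^{\nabla^*}$, and the covariant derivative terms regroup as $(\nabla_u v - \nabla_v u) + (\nabla^*_u v - \nabla^*_v u)$. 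Finally I would split the bracket sum as $[u,v]_E^{\nabla} + [u,v]_E^{\nabla^*}$ and attach each piece to the corresponding pair of covariant derivatives, obtaining exactly $T(\nabla)(u,v) + T(\nabla^*)(u,v)$ via Equation (\ref{eb11}).

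The only subtle point worth flagging in the write-up is that, unlike Proposition (\ref{SSp7}), neither admissibility nor the anti-symmetry of the modified brackets is invoked, and conjugacy of $\nabla$ and $\nabla^*$ plays no role either; the factor $\tfrac{1}{2}$ in Definition (\ref{SSd5}) is precisely what makes the two modified-bracket contributions add up to $[u,v]_E^{\nabla}+[u,v]_E^{\nabla^*}$ rather than to twice some average. This is also why the statement is the honest analogue of the manifold identity (\ref{Se17}) in Section (\ref{se3}): there the two Lie-bracket contributions coming from the usual relative torsion add up to $2[U,V]$, and here the modified brackets play the role of the Lie bracket in a way that is perfectly compatible with the additive structure of the two $E$-torsions. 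I therefore do not anticipate any real obstacle; the proof will be two or three lines of algebra.
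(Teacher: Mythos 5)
Your proof is correct and is essentially identical to the paper's own argument: both simply expand $T(\nabla,\nabla^*)(u,v)$ and $T(\nabla^*,\nabla)(u,v)$ from Definition (\ref{SSd5}), note that the two half-weighted bracket sums combine into $[u,v]_E^{\nabla}+[u,v]_E^{\nabla^*}$, and regroup via Equation (\ref{eb11}). Your observation that neither conjugacy nor admissibility is actually used is accurate and consistent with the paper's computation.
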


\begin{proof} Similarly to the previous proposition, it can be proven by a simple direct computation:
\begin{align}
T(\nabla, \nabla^*)(u, v) + T(\nabla^*, \nabla)(v, u) &= \left\{ \nabla_u v - \nabla^*_v u - \frac{1}{2} \left( [u, v]_E^{\nabla} + [u, v]_E^{\nabla^*} \right) \right\} \nonumber\\
& \quad + \left\{ \nabla^*_u v - \nabla_v u - \frac{1}{2} \left( [u, v]_E^{\nabla^*} + [u, v]_E^{\nabla} \right) \right\} \nonumber\\
&= \left\{ \nabla_u v - \nabla_v u - [u, v]_E^{\nabla} \right\} + \left\{ \nabla^*_u v - \nabla^*_v u - [u, v]_E^{\nabla^*} \right\} \nonumber\\
&= T(\nabla)(u, v) + T(\nabla^*)(u, v), \nonumber
\end{align}
which is the desired result.
\end{proof}

\begin{Proposition} If $\ \nabla$ and $\nabla^*$ are $E$-torsion-free conjugate linear $E$-connections, then their mean linear $E$-connection $\nabla^0$ is an $E$-Levi-Civita connection.
\label{SSp9}
\end{Proposition}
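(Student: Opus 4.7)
The plan is to verify the two defining conditions of an $E$-Levi-Civita connection separately: that $\nabla^0$ is $E$-metric-$g$-compatible and that $\nabla^0$ is $E$-torsion-free. The first claim is already essentially stated in the paragraph following Equation (\ref{SSe4}), so I would only briefly recall the argument: applying the conjugation relation (\ref{SSe1}) to $\nabla$ and $\nabla^*$ and averaging, the mixed terms combine into $\rho(u)(g(v,w))$ via symmetry of $g$, which yields $(\nabla^0_u g)(v,w) = 0$. No admissibility hypothesis is needed here.

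The core of the proof is the torsion computation. First I would observe that since $\nabla$ and $\nabla^*$ are $E$-torsion-free, both are admissible (as noted in the excerpt between Equations (\ref{eb11}) and (\ref{eb12})), so the modified brackets $[.,.]_E^{\nabla}$ and $[.,.]_E^{\nabla^*}$ are anti-symmetric. The key algebraic step is to express $[u,v]_E^{\nabla^0}$ in terms of the modified brackets of $\nabla$ and $\nabla^*$. Using the $C^{\infty}(M,\mathbb{R})$-linearity of $L$ in its middle argument together with the definition $\nabla^0 = \tfrac{1}{2}(\nabla + \nabla^*)$, one obtains
\begin{equation}
[u,v]_E^{\nabla^0} = [u,v]_E - L(e^a, \nabla^0_{X_a} u, v) = \tfrac{1}{2}\bigl([u,v]_E^{\nabla} + [u,v]_E^{\nabla^*}\bigr). \nonumber
\end{equation}

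Substituting this expression into the definition (\ref{eb11}) of the $E$-torsion operator for $\nabla^0$ and splitting the averaged covariant derivatives gives
\begin{equation}
T(\nabla^0)(u,v) = \tfrac{1}{2}\bigl(\nabla_u v - \nabla_v u - [u,v]_E^{\nabla}\bigr) + \tfrac{1}{2}\bigl(\nabla^*_u v - \nabla^*_v u - [u,v]_E^{\nabla^*}\bigr) = \tfrac{1}{2}\bigl(T(\nabla)(u,v) + T(\nabla^*)(u,v)\bigr), \nonumber
\end{equation}
which vanishes by hypothesis. Combined with metric compatibility, this establishes that $\nabla^0$ is $E$-Levi-Civita.

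I do not expect a serious obstacle here. The only subtle point is that the modified bracket depends on the connection chosen to modify it, so one must be careful not to confuse $[.,.]_E^{\nabla^0}$ with the naive average of modified brackets of $\nabla$ and $\nabla^*$; fortunately the linearity of $L$ shows these coincide, which is what makes the torsion operator $\mathbb{R}$-linear in the connection argument in this averaged sense. With that identity in hand, the rest is a one-line cancellation.
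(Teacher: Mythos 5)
Your proof is correct and follows essentially the same route as the paper: the torsion of $\nabla^0$ is shown to be the average of the torsions of $\nabla$ and $\nabla^*$ via the identity $[u,v]_E^{\nabla^0} = \tfrac{1}{2}([u,v]_E^{\nabla} + [u,v]_E^{\nabla^*})$ (which the paper uses implicitly and you make explicit), and metric compatibility of the mean connection is quoted from the earlier discussion. Your remark about admissibility is harmless but not actually needed for the computation, which the paper itself also points out.
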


\begin{proof} 
The mean linear $E$-connection $\nabla^0$ is defined as $\frac{1}{2}(\nabla + \nabla^*)$ by Equation (\ref{SSe4}). Its $E$-torsion can be evaluated as
\begin{align} 
T(\nabla^0)(u, v) &= \nabla^0_u v - \nabla^0_ v u - [u, v]_E^{\nabla^0} \nonumber\\
&= \frac{1}{2} \left\{ \left( \nabla_u v + \nabla^*_u v \right) - \left( \nabla_v u + \nabla^*_v u \right) - \left( [u, v]_E^{\nabla} + [u, v]_E^{\nabla^*} \right) \right\} \nonumber\\
&= \frac{1}{2} \left[ T(\nabla)(u, v) + T(\nabla^*)(u, v) \right] \nonumber\\
&= 0. \nonumber\\
\end{align}
As the mean linear $E$-connection is also $E$-metric-$g$-compatible, it is an $E$-Levi-Civita connection.
\end{proof}
In \cite{10}, we proved that all $E$-torsion-free and in particular $E$-Levi-Civita connections have to admissible. Even though the admissibility is not used in the previous proposition, it necessarily holds.

Similarly to the usual case, one can consider the one-parameter family of linear $E$-connections for $\alpha \in \mathbb{R}$:
\begin{equation} \nabla^{(\alpha)} := \frac{1 + \alpha}{2} \nabla^* + \frac{1 - \alpha}{2} \nabla.
\label{SSe24}
\end{equation}
These $\alpha$-dependent linear $E$-connections are called $\alpha$-$E$-connections. In particular one has
\begin{equation} \nabla^{(1)} = \nabla^*, \qquad \nabla^{(-1)} = \nabla, \qquad \nabla^{(0)} = \nabla^0.
\label{SSe25}
\end{equation}
$\alpha$-$E$-connections in the generalized geometry setting can be found in \cite{20}. Identical to the manifold framework, for $\alpha$-$E$-connections, the following proposition holds.

\begin{Proposition} For any $\alpha \in \mathbb{R}$, the following relations hold
\begin{align} 
&(\nabla^{(\alpha)})^* = \nabla^{(- \alpha)}, \nonumber\\
&T(\nabla^{(\alpha)}) = \frac{1 + \alpha}{2} T(\nabla^*) + \frac{1 - \alpha}{2} T(\nabla), \nonumber\\
&Q(\nabla^{(\alpha)}, g) = - \alpha Q(\nabla, g) = \alpha Q(\nabla^*, g),
\label{SS26}
\end{align}
for all $u, v, w \in \mathfrak{X}(E)$.
\label{SSp10}
\end{Proposition}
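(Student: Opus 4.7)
The plan is to prove each of the three identities by a direct computation that exploits the $\mathbb{R}$-linear way in which $\nabla^{(\alpha)}$ depends on $\nabla$ and $\nabla^*$, together with Proposition (\ref{SSp3}) and the conjugation relation (\ref{SSe1}). Notice that admissibility is nowhere assumed in the statement, and I would expect none of the steps to require it.

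For the first identity, I would insert
$\nabla^{(\alpha)}_u v = \frac{1+\alpha}{2} \nabla^*_u v + \frac{1-\alpha}{2} \nabla_u v$
into $g(\nabla^{(\alpha)}_u v, w)$ and use the conjugation relation (\ref{SSe1}), once in the form $g(\nabla^*_u v, w) = \rho(u)(g(v,w)) - g(v, \nabla_u w)$ and once in the form $g(\nabla_u v, w) = \rho(u)(g(v,w)) - g(v, \nabla^*_u w)$. Because $\frac{1+\alpha}{2} + \frac{1-\alpha}{2} = 1$, the $\rho(u)(g(v,w))$ coefficient collapses to $1$, while the remaining piece regroups as $g\bigl(v, \tfrac{1-\alpha}{2} \nabla^*_u w + \tfrac{1+\alpha}{2} \nabla_u w\bigr) = g(v, \nabla^{(-\alpha)}_u w)$. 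Uniqueness of the conjugate linear $E$-connection then forces $(\nabla^{(\alpha)})^* = \nabla^{(-\alpha)}$.

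For the torsion identity, the key observation is that the modified bracket itself is linear in the connection: since the locality operator $L$ is $C^\infty(M, \mathbb{R})$-multilinear in each slot, expanding $L(e^a, \nabla^{(\alpha)}_{X_a} u, v)$ by the convex combination (\ref{SSe24}) and then rearranging $[u, v]_E$ using $\frac{1+\alpha}{2} + \frac{1-\alpha}{2} = 1$ gives, directly from the definition (\ref{eb10}),
\begin{equation} [u, v]_E^{\nabla^{(\alpha)}} = \frac{1+\alpha}{2} [u, v]_E^{\nabla^*} + \frac{1-\alpha}{2} [u, v]_E^{\nabla}. \nonumber \end{equation}
Inserting this together with the analogous decomposition of $\nabla^{(\alpha)}_u v - \nabla^{(\alpha)}_v u$ into the defining equation (\ref{eb11}) for $T(\nabla^{(\alpha)})$ produces the stated weighted sum of $T(\nabla^*)$ and $T(\nabla)$.

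For the non-metricity identity, I would apply the same linearity trick to $Q(\nabla^{(\alpha)}, g)(u, v, w) = \rho(u)(g(v,w)) - g(\nabla^{(\alpha)}_u v, w) - g(v, \nabla^{(\alpha)}_u w)$, again using $\frac{1+\alpha}{2} + \frac{1-\alpha}{2} = 1$ to split the $\rho(u)(g(v,w))$ term, which yields
\begin{equation} Q(\nabla^{(\alpha)}, g) = \frac{1+\alpha}{2} Q(\nabla^*, g) + \frac{1-\alpha}{2} Q(\nabla, g). \nonumber \end{equation}
Invoking Proposition (\ref{SSp3}) to substitute $Q(\nabla^*, g) = -Q(\nabla, g)$, the coefficients combine to $-\alpha$, giving the desired $Q(\nabla^{(\alpha)}, g) = -\alpha Q(\nabla, g) = \alpha Q(\nabla^*, g)$. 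There is no real obstacle: every step is forced by $\mathbb{R}$-linearity of the construction of $\nabla^{(\alpha)}$ combined with the $C^\infty$-multilinearity of $L$ and the previously established conjugation/non-metricity relation, so the proposition is essentially a bookkeeping exercise mirroring its manifold analogue.
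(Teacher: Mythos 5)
Your proposal is correct and follows essentially the same route as the paper: the bracket decomposition $[u,v]_E^{\nabla^{(\alpha)}} = \frac{1+\alpha}{2}[u,v]_E^{\nabla^*} + \frac{1-\alpha}{2}[u,v]_E^{\nabla}$ via $C^\infty$-multilinearity of $L$ for the torsion identity, and Proposition (\ref{SSp3}) for the non-metricity identity. For the first identity the paper merely cites involutivity of conjugation, so your explicit computation with the conjugation relation and uniqueness of the conjugate is a welcome filling-in of that step rather than a different argument.
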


\begin{proof}
The first identity follows from the fact that the conjugation is an involution. The second one can be proven by using the decomposition of the modified bracket corresponding to some $\nabla^{(\alpha)}$:
\begin{equation} [u, v]_E^{\nabla^{(\alpha)}} = \frac{1 + \alpha}{2} [u, v]_E^{\nabla^*} + \frac{1 - \alpha}{2} [u, v]_E^{\nabla}. \nonumber
\end{equation}
Moreover, the last one is a consequence of Proposition (\ref{SSp3}) where
\begin{equation} Q(\nabla, g) = - Q(\nabla^*, g) \nonumber
\end{equation}
is proven.
\end{proof}

\begin{Proposition} On a local pre-Leibniz algebroid with a locality projector, the curvature of $\alpha$-$E$-connections can be expressed as
\begin{align} 
R(\nabla^{(\alpha)})(u, v) w &= \frac{1 + \alpha}{2} R(\nabla^*)(u, v) w + \frac{1 - \alpha}{2} R(\nabla)(u, v) w \nonumber\\
& \quad + \frac{1 - \alpha^2}{4} \left[ \Delta(\nabla, \nabla^*) \left( v, \Delta(\nabla, \nabla^*)(u, w) \right) - \Delta(\nabla, \nabla^*) \left( u, \Delta(\nabla, \nabla^*)(v, w) \right) \right] \nonumber\\
& \quad + \frac{1 - \alpha^2}{4} \Delta(\nabla, \nabla^*) \left( [u, v]_E^{\widehat{\nabla}} - [u, v]_E^{\widehat{\nabla^*}}, w \right),
\label{SS27}
\end{align}
for all $u, v, w \in \mathfrak{X}(E)$. 
\label{SSp11}
\end{Proposition}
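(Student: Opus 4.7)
The plan is to compute $R(\nabla^{(\alpha)})(u,v)w$ directly from the defining formula (\ref{eb8}) in its projected-bracket form $R(\nabla)(u,v)w = \nabla_u \nabla_v w - \nabla_v \nabla_u w - \nabla_{[u,v]_E^{\widehat{\nabla}}} w$, and then regroup the resulting terms so that the parts quadratic in a single connection collapse onto $R(\nabla^*)$ and $R(\nabla)$, while the mixed parts assemble into the claimed difference-operator expressions. I set $a := \frac{1+\alpha}{2}$ and $b := \frac{1-\alpha}{2}$, so that $a + b = 1$ and $ab = \frac{1-\alpha^2}{4}$.

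First I would expand the iterated covariant derivatives using $\nabla^{(\alpha)} = a \nabla^* + b \nabla$ and the $\mathbb{R}$-bilinearity of each connection, obtaining
\begin{equation*}
\nabla^{(\alpha)}_u \nabla^{(\alpha)}_v w = a^2 \nabla^*_u \nabla^*_v w + ab\bigl(\nabla^*_u \nabla_v w + \nabla_u \nabla^*_v w\bigr) + b^2 \nabla_u \nabla_v w,
\end{equation*}
and an analogous expression with $u,v$ swapped. Next I would use the $C^{\infty}(M,\mathbb{R})$-multilinearity of $\widehat{L} = \mathcal{P} \circ L$, together with the definition (\ref{eb11b}), to derive the projected-bracket identity $[u,v]_E^{\widehat{\nabla^{(\alpha)}}} = a\,[u,v]_E^{\widehat{\nabla^*}} + b\,[u,v]_E^{\widehat{\nabla}}$, in direct analogy with the modified-bracket decomposition used inside the proof of Proposition (\ref{SSp10}). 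Substituting this into $\nabla^{(\alpha)}_{[u,v]_E^{\widehat{\nabla^{(\alpha)}}}} w$ and expanding the outer $\nabla^{(\alpha)}$ once more produces four further terms with coefficients $a^2, ab, ab, b^2$.

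The central algebraic step is to replace $a^2 = a - ab$ and $b^2 = b - ab$: the $a$ and $b$ coefficients then reassemble into $a\, R(\nabla^*)(u,v)w + b\, R(\nabla)(u,v)w$, the first two lines of the claim, while the new $ab$ contributions merge with those already present from the mixed derivative and mixed bracket terms. Collecting everything multiplied by $ab$, I would rewrite differences like $\nabla_u \nabla^*_v w - \nabla_u \nabla_v w$ as $-\nabla_u \Delta(\nabla,\nabla^*)(v,w)$ and group them with their $\nabla^*$ counterparts; the derivative part of this remainder telescopes into $\Delta(\nabla, \nabla^*)\bigl(v, \Delta(\nabla, \nabla^*)(u,w)\bigr) - \Delta(\nabla, \nabla^*)\bigl(u, \Delta(\nabla, \nabla^*)(v, w)\bigr)$, while the leftover bracket contribution $\nabla^*_{[u,v]_E^{\widehat{\nabla^*}}} w - \nabla_{[u,v]_E^{\widehat{\nabla^*}}} w + \nabla_{[u,v]_E^{\widehat{\nabla}}} w - \nabla^*_{[u,v]_E^{\widehat{\nabla}}} w$ collapses to $\Delta(\nabla, \nabla^*)\bigl([u,v]_E^{\widehat{\nabla}} - [u,v]_E^{\widehat{\nabla^*}},\ w\bigr)$. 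Multiplying by $ab = \frac{1-\alpha^2}{4}$ gives precisely the stated formula.

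The main obstacle is purely organizational: the calculation produces eight mixed derivative terms and four mixed bracket terms, and keeping the signs straight while recognizing each residual expression as a nested difference operator is the only nontrivial bookkeeping. The only conceptually new ingredient beyond the manifold identity (\ref{Se21}) is the last line, which reflects the fact that on a pre-Leibniz algebroid the projected modified bracket itself depends on the chosen connection; this term has no manifold analogue because on $T(M)$ the modification is trivial and $[u,v]_E^{\widehat{\nabla}}$ reduces to the Lie bracket independently of $\nabla$. Note that no admissibility hypothesis is required, since the entire argument uses only $\mathbb{R}$-linearity of the connections and $C^\infty(M,\mathbb{R})$-multilinearity of $\widehat{L}$.
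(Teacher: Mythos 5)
Your proposal is correct and follows essentially the same route as the paper: expand $\nabla^{(\alpha)}_u\nabla^{(\alpha)}_v w$ and $\nabla^{(\alpha)}_{[u,v]_E^{\widehat{\nabla^{(\alpha)}}}}w$ bilinearly, use $[u,v]_E^{\widehat{\nabla^{(\alpha)}}} = \frac{1+\alpha}{2}[u,v]_E^{\widehat{\nabla^*}} + \frac{1-\alpha}{2}[u,v]_E^{\widehat{\nabla}}$, and recombine; your explicit bookkeeping via $a^2 = a - ab$, $b^2 = b - ab$ is exactly the step the paper compresses into ``combining these terms.'' Your expansion of the bracket term (with coefficients $a^2, ab, ab, b^2$ attached to $\nabla^*_{[u,v]^{\widehat{\nabla^*}}}$, $\nabla^*_{[u,v]^{\widehat{\nabla}}}$, $\nabla_{[u,v]^{\widehat{\nabla^*}}}$, $\nabla_{[u,v]^{\widehat{\nabla}}}$ respectively) is in fact the correct one, and your closing observation that no admissibility is needed is consistent with the hypotheses as stated.
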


\begin{proof} This follows from the fact that the terms with two $E$-covariant derivatives can be expressed as
\begin{align} 
\nabla^{(\alpha)}_u \nabla^{(\alpha)}_v w &= \left( \frac{1 + \alpha}{2} \right)^2 \nabla^*_u \nabla^*_v w + \frac{1 - \alpha^2}{4} \nabla^*_u \nabla_v w \nonumber\\
& \quad + \frac{1 - \alpha^2}{4} \nabla_u \nabla^*_v w + \left( \frac{1 - \alpha}{2} \right)^2 \nabla_u \nabla_v w, \nonumber
\end{align}
and the term with the projected modified can be written as
\begin{align} \nabla^{(\alpha)}_{[u, v]_E^{\widehat{\nabla^{(\alpha)}}}} w &= \left( \frac{1 + \alpha}{2} \right)^2 \nabla^*_{[u, v]_E^{\widehat{\nabla^*}}} w + \frac{1 - \alpha^2}{4} \nabla^*_{[u, v]_E^{\widehat{\nabla^*}}} w \nonumber\\
& \quad + \left( \frac{1 - \alpha}{2} \right)^2 \nabla^*_{[u, v]_E^{\widehat{\nabla}}} w + \frac{1 - \alpha^2}{4} \nabla_{[u, v]_E^{\widehat{\nabla}}} w, \nonumber
\end{align}
for all $u, v, w \in \mathfrak{X}(E)$. Combining these terms, one gets the desired result.
\end{proof}
Note that the expression (\ref{SS27}) reduces to Equation (\ref{Se21}) when the locality operator vanishes. Moreover, it implies that for a pair of flat conjugate linear $E$-connections $\nabla$ and $\nabla$,
\begin{equation} R(\nabla^{(\alpha)}) = R(\nabla^{(- \alpha)}),
\label{SS29}
\end{equation}
which is also valid for the usual case (\ref{Se22}).


\section{Hessian Structures on Manifolds}
\label{se5}

The action of the second order covariant derivative of a smooth function is called the Hessian of the function:
\begin{equation} H(\nabla)(f)(U, V) := \nabla^2_{U, V} f := U(V(f)) - df(\nabla_U V) = (\nabla df)(U, V).
\label{sss1}
\end{equation}
It is $C^{\infty}(M, \mathbb{R})$-bilinear, so that it defines a $(0, 2)$-type tensor. The Hessian $H(\nabla)(f)$ is symmetric if and only if $\nabla$ is torsion-free. Hence, for a torsion-free affine connection, the Hessian defines a metric if it is also non-degenerate. This leads one to the definition of Hessian metrics. For a flat affine connection $\nabla$, a metric $g$ is called a Hessian metric if $g$ can be locally expressed by the Hessian of some smooth function $f \in C^{\infty}(M, \mathbb{R})$:
\begin{equation} g = H(\nabla)(f).
\label{sss2}
\end{equation}
In this case, the doublet $(g, \nabla)$ is called a Hessian structure, and $f$ is called the potential. For a Hessian structure $(g, \nabla)$, the affine connection $\nabla$ is necessarily torsion-free, and it is flat by definition. Hence, $(M, g, \nabla)$ defines a symmetric teleparallel geometry. For a detailed exposition of Hessian structures see \cite{21}.

Hessian structures are directly related to statistical structures. The doublet $(g, \nabla)$ with flat $\nabla$ is a Hessian structure if and only if $g$ is $\nabla$-Codazzi so that $(g, \nabla)$ is a statistical structure. Hence, a Hessian structure can be equivalently defined as a statistical structure with a flat affine connection. The following fundamental theorem of statistical geometry guarantees that if $\nabla$ is flat, so is $\nabla^*$:
\begin{equation} g(R(\nabla)(U, V) W, Z) = - g(R(\nabla^*)(U, V) Z, W),
\label{sss3}
\end{equation}
for all $U, V, W, Z \in \mathfrak{X}(M)$. An affine connection $\nabla$ is said to have a constant curvature $\kappa \in \mathbb{R}$ if 
\begin{equation} R(\nabla)(U, V) W = \kappa \left[ g(V, W) U - g(U, W) V \right],
\label{sss4}
\end{equation}
for all $U, V, W \in \mathfrak{X}(M)$. The fundamental theorem of statistical geometry also implies that if $\nabla$ has a constant curvature $\kappa$ so does $\nabla^*$.


\section{Hessian Structures on Pre-Leibniz Algebroids}
\label{se6}

Completely analogous to the definition of the Hessian of a function, one can define the $E$-Hessian on an anchored vector bundle $(E, \rho)$.

\begin{Definition} The action of the second order $E$-covariant derivative on a smooth function is called the $E$-Hessian the function:
\begin{equation} H(\nabla)(f)(u, v) := \nabla^2_{u, v} f := \rho(u)(\rho(v)(f)) - Df(\nabla_u v) = (\nabla Df)(u, v).
\label{l1}
\end{equation}
\label{ld1}
\end{Definition}

One can explicitly show that, $H(\nabla)(f)$ is indeed a $(0, 2)$-type $E$-tensor. In order to define Hessian $E$-metrics, one needs to figure out when the $E$-Hessian is symmetric. Similarly to the usual case for torsion-free connections, the symmetry is equivalent to projected $E$-torsion-freeness provided certain assumptions.

\begin{Proposition} For an admissible linear $E$-connection $\nabla$ whose projected $E$-torsion operator's image is not entirely in the kernel of the anchor $\rho$, the following are equivalent:
\begin{enumerate}
\item The $E$-Hessian $H(\nabla)(f)$ of any smooth function $f$ is symmetric.
\item $\nabla$ is projected $E$-torsion-free, i.e. $\widehat{T}(\nabla) = 0$.
\item For any $\Omega \in \Omega^1(E), u, v \in \mathfrak{X}(E)$
\begin{equation} \widehat{d}(\nabla) \Omega (u, v) = \left( \nabla_u \Omega \right) (v) - \left( \nabla_v \Omega \right) (u).
\label{l2}
\end{equation}
\end{enumerate}
\label{lp1}
\end{Proposition}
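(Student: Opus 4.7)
My strategy is to express both the antisymmetric part of the Hessian and the deviation of $\widehat{d}(\nabla)\Omega$ from the Leibniz-style formula as explicit functionals of the projected $E$-torsion operator $\widehat{T}(\nabla)$. Once these two identities are in hand, all three conditions reduce to statements about $\widehat{T}(\nabla)$, and the equivalences follow almost by inspection.

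For the first identity, I would start from Definition \ref{ld1} and directly compute $H(\nabla)(f)(u,v) - H(\nabla)(f)(v,u)$. The crucial input is that on a pre-Leibniz algebroid $\rho$ intertwines the brackets, $\rho([u,v]_E) = [\rho(u), \rho(v)]$; moreover, since $\widehat{L}$ has image in $\ker \rho$ by the defining property of the locality projector, the same identity lifts to the projected modified bracket, $\rho([u,v]_E^{\widehat{\nabla}}) = [\rho(u), \rho(v)]$. Rewriting $\rho(u)(\rho(v)(f)) - \rho(v)(\rho(u)(f))$ as $Df\bigl([u,v]_E^{\widehat{\nabla}}\bigr)$ and recognising the projected $E$-torsion via (\ref{eb13}), I would obtain
\[ H(\nabla)(f)(u,v) - H(\nabla)(f)(v,u) = -Df\bigl(\widehat{T}(\nabla)(u,v)\bigr) = -\rho\bigl(\widehat{T}(\nabla)(u,v)\bigr)(f). \]

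For the second identity, I would expand $\widehat{d}(\nabla)\Omega(u,v)$ for a $1$-form $\Omega$ using (\ref{eb16}) at $p=1$, and rewrite the right-hand side of (\ref{l2}) using the Leibniz rule $(\nabla_u \Omega)(v) = \rho(u)(\Omega(v)) - \Omega(\nabla_u v)$. The terms involving $\rho(u)(\Omega(v))$ and $\rho(v)(\Omega(u))$ cancel, leaving
\[ \widehat{d}(\nabla)\Omega(u,v) - \bigl[(\nabla_u \Omega)(v) - (\nabla_v \Omega)(u)\bigr] = \Omega\bigl(\widehat{T}(\nabla)(u,v)\bigr). \]

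With both identities in place, $(2)\Rightarrow(1)$ and $(2)\Rightarrow(3)$ are immediate, and $(3)\Rightarrow(2)$ follows by letting $\Omega$ range over all of $\Omega^1(E)$ and invoking the non-degeneracy of the canonical pairing between $\mathfrak{X}(E)$ and $\Omega^1(E)$. The main obstacle is $(1)\Rightarrow(2)$: the Hessian identity only guarantees $\rho\bigl(\widehat{T}(\nabla)(u,v)\bigr)(f) = 0$ for every smooth $f$, hence only that $\rho \circ \widehat{T}(\nabla) = 0$, which is strictly weaker than $\widehat{T}(\nabla) = 0$. This is precisely the pathology that the standing hypothesis on $\nabla$ is designed to exclude: under it, having the image of $\widehat{T}(\nabla)$ lie in $\ker \rho$ is incompatible with $\widehat{T}(\nabla)$ being nonzero, so we may conclude $\widehat{T}(\nabla) = 0$ and close the circle.
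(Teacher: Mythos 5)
Your proposal is correct and follows essentially the same route as the paper: both reduce statements (1) and (3) to the two identities $H(\nabla)(f)(u,v) - H(\nabla)(f)(v,u) = -\rho\bigl(\widehat{T}(\nabla)(u,v)\bigr)(f)$ and $\widehat{d}(\nabla)\Omega(u,v) = (\nabla_u\Omega)(v) - (\nabla_v\Omega)(u) + \Omega\bigl(\widehat{T}(\nabla)(u,v)\bigr)$, using that the projected modified bracket is a pre-Leibniz bracket (i.e.\ $\rho([u,v]_E^{\widehat{\nabla}}) = [\rho(u),\rho(v)]$ since $\widehat{L}$ maps into $\ker\rho$), and both invoke the standing hypothesis only to pass from $\rho\circ\widehat{T}(\nabla)=0$ to $\widehat{T}(\nabla)=0$. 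Your explicit flagging of why $(1)\Rightarrow(2)$ needs that hypothesis matches the paper's own remark.
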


\begin{proof} We will prove this statement by showing $(1 \iff 2) \ \& \ (2 \iff 3)$. 
\begin{align} 
H(\nabla)(f)(u, v) - H(\nabla)(f)(v, u) &= \left\{ \rho(u)(\rho(v)(f)) - Df(\nabla_u v) \right\} - \left\{ \rho(v)(\rho(u)(f)) - Df(\nabla_v u) \right\} \nonumber\\
&= [\rho(u), \rho(v)](f) - \left\{ Df(\nabla_u v) - Df(\nabla_v u) \right\} \nonumber\\
&= [\rho(u), \rho(v)](f) - \rho(\widehat{T}(\nabla)(u, v))(f) - \rho([u, v]_E^{\widehat{\nabla}})(f) \nonumber\\
&= - \rho(\widehat{T}(\nabla)(u, v))(f) \nonumber
\end{align}
Hence, $H(\nabla)(f)$ is symmetric if and only if the projected $E$-torsion $\widehat{T}(\nabla)$ vanishes as it is assumed that the image of the projected $E$-torsion operator is not a subset of the kernel of the anchor. Note that, here we used the fact that the projected modified bracket is a pre-Leibniz bracket. We did not need the anti-symmetry of the bracket, so this equivalence is actually valid for any linear $E$-connection. This is not the case for the other equivalence as one needs the admissibility for the definition of the projected $E$-exterior derivative. For the equivalence $(2 \iff 3)$, we start with
\begin{align} 
\widehat{d}(\nabla) \Omega (u, v) &= \rho(u)(\Omega(v)) - \rho(v)(\Omega(u)) - \Omega([u, v]_E^{\widehat{\nabla}}) \nonumber\\
&= \left\{ (\nabla_u \Omega)(v) + \Omega(\nabla_u v) \right\} - \left\{ (\nabla_v \Omega)(u) + \Omega(\nabla_v u) \right\} - \Omega([u, v]_E^{\widehat{\nabla}}) \nonumber\\
&= \left\{ (\nabla_u \Omega)(v) - (\nabla_v \Omega)(u) \right\} + \left\{ \Omega(\nabla_u v) - \Omega(\nabla_v u) \right\} - \Omega([u, v]_E^{\widehat{\nabla}}) \nonumber\\
&= \left\{ (\nabla_u \Omega)(v) - (\nabla_v \Omega)(u) \right\} + \Omega \left( \widehat{T}(\nabla)(u, v) + [u, v]_E^{\widehat{\nabla}} \right) - \Omega([u, v]_E^{\widehat{\nabla}}) \nonumber\\ 
&= \left\{ (\nabla_u \Omega)(v) - (\nabla_v \Omega)(u) \right\} + \Omega \left( \widehat{T}(\nabla)(u, v) \right). \nonumber
\end{align}
Hence, 3 holds if and only if the projected $E$-torsion vanishes. Therefore, we proved all the equivalences.
\end{proof}

\noindent Note that the equivalence of 2 \& 3 is also valid for $E$-torsion-free admissible linear $E$-connections if the term $\widehat{d}(\nabla)$ in Equation (\ref{l2}) is replaced by $d(\nabla)$. They are not equivalent to 1 as one uses the pre-Leibniz property of the projected modified bracket. Moreover, if the image of the projected $E$-torsion operator is in the kernel of the anchor, then the $E$-Hessian is automatically symmetric without any condition on the linear $E$-connection. For example, if the anchor is trivial, i. e. $\rho = 0$, then for every linear $E$-connection, the $E$-Hessian is symmetric.

\begin{Corollary} If the image of the projected $E$-torsion operator of a linear $E$-connection $\nabla$ is a subset of the kernel of the anchor, then the $E$-Hessian $H(\nabla)(f)$ is symmetric.
\label{lc1}
\end{Corollary}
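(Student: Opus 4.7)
The plan is to piggy-back directly on the computation already performed inside the proof of Proposition (\ref{lp1}). Recall that there, starting from the definition of the $E$-Hessian and invoking only the pre-Leibniz property $\rho([u,v]_E) = [\rho(u),\rho(v)]$ (which is inherited by the projected modified bracket $[.,.]_E^{\widehat{\nabla}}$), one obtains the identity
\begin{equation}
H(\nabla)(f)(u,v) - H(\nabla)(f)(v,u) = -\rho\bigl(\widehat{T}(\nabla)(u,v)\bigr)(f), \nonumber
\end{equation}
for all $u,v \in \mathfrak{X}(E)$ and $f \in C^{\infty}(M,\mathbb{R})$. Crucially, as remarked after Proposition (\ref{lp1}), this identity does not require admissibility of $\nabla$; it is valid for any linear $E$-connection on a regular local pre-Leibniz algebroid equipped with a locality projector.

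Given this identity, the proof of the corollary is essentially immediate. The hypothesis is that $\widehat{T}(\nabla)(u,v) \in \ker \rho$ for all $u, v \in \mathfrak{X}(E)$. Applying $\rho$ to $\widehat{T}(\nabla)(u,v)$ therefore produces the zero vector field on $M$, and so $\rho(\widehat{T}(\nabla)(u,v))(f) = 0$ for every smooth function $f$. Substituting into the displayed identity gives $H(\nabla)(f)(u,v) = H(\nabla)(f)(v,u)$, which is exactly the symmetry of the $E$-Hessian.

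There is no real obstacle here; the statement is a direct weakening of the $(1 \iff 2)$ equivalence in Proposition (\ref{lp1}), where the assumption that the image of $\widehat{T}(\nabla)$ is not entirely contained in $\ker \rho$ was needed precisely to turn the vanishing of $\rho(\widehat{T}(\nabla)(u,v))(f)$ for all $f$ into the vanishing of $\widehat{T}(\nabla)(u,v)$ itself. Under the opposite hypothesis of this corollary, one loses the reverse implication but retains the forward one, which is exactly what is claimed. The only thing worth emphasizing in the written-up proof is that neither admissibility of $\nabla$ nor any condition on the symmetry of the modified bracket is invoked, so the conclusion holds in full generality on any regular local pre-Leibniz algebroid; the special case $\rho = 0$ mentioned in the remark before the corollary then appears as an immediate subcase, since the kernel of the trivial anchor is all of $\mathfrak{X}(E)$.
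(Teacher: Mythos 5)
Your proof is correct and follows exactly the paper's own argument: both invoke the identity $H(\nabla)(f)(u,v) - H(\nabla)(f)(v,u) = -\rho\bigl(\widehat{T}(\nabla)(u,v)\bigr)(f)$ established in the proof of Proposition (\ref{lp1}) and observe that the hypothesis kills the right-hand side. Your added remarks on why admissibility is not needed and how the corollary relates to the non-degeneracy assumption in Proposition (\ref{lp1}) accurately reflect the paper's own commentary.
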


\begin{proof} In the proof of the previous proposition (\ref{lp1}), we proved that
\begin{equation} H(\nabla)(f)(u, v) - H(\nabla)(f)(v, u) = - \rho(\widehat{T}(\nabla)(u, v))(f).
\nonumber
\end{equation}
Hence if the image of $\widehat{T}(\nabla)$ is in the kernel of the anchor $\rho$, the right-hand side is zero so that the $E$-Hessian is symmetric.
\end{proof}

\begin{Corollary} Let $\nabla$ be a projected $E$-torsion-free linear $E$-connection with the $E$-conjugate $\nabla^*$ satisfying
\begin{equation} T(\nabla)(u, v) - T(\nabla^*)(u, v) = [u, v]_E^{\nabla} - [u, v]_E^{\nabla^*},
\label{l2b}
\end{equation}
then the $E$-Hessian of $\nabla^*$ is also symmetric.
\label{lc2}
\end{Corollary}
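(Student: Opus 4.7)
The plan is to reduce the claim to Corollary~\ref{lc1} by showing that the image of $\widehat{T}(\nabla^*)$ lies in the kernel of the anchor~$\rho$. Once this is established, the identity
\begin{equation*}
H(\nabla^*)(f)(u,v) - H(\nabla^*)(f)(v,u) = -\rho(\widehat{T}(\nabla^*)(u,v))(f),
\end{equation*}
which was derived in the proof of Proposition~\ref{lp1} without any admissibility assumption on the connection, immediately gives the symmetry of the $E$-Hessian of $\nabla^*$.

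The first step is to unpack the hypothesis~(\ref{l2b}). Substituting the defining formula $T(\nabla)(u,v) = \nabla_u v - \nabla_v u - [u,v]_E^{\nabla}$ and its $\nabla^*$-analogue produces a linear identity relating the antisymmetrized covariant derivatives $\nabla_u v - \nabla_v u$ and $\nabla^*_u v - \nabla^*_v u$ to the modified brackets $[u,v]_E^{\nabla}$ and $[u,v]_E^{\nabla^*}$. The point of~(\ref{l2b}) is that after applying $\rho$, this identity transfers the information carried by $\widehat{T}(\nabla)=0$ to $\nabla^*$.

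The second step is to apply the anchor. Two standard facts from the (local) pre-Leibniz setup are at hand: the bracket-intertwining $\rho([u,v]_E) = [\rho(u),\rho(v)]$, and the inclusion $\mathrm{Im}(\widehat{L}) \subseteq \ker \rho$ built into the definition of the locality projector. Together they yield $\rho([u,v]_E^{\widehat{\nabla}}) = \rho([u,v]_E^{\widehat{\nabla^*}}) = [\rho(u),\rho(v)]$. Combining this with the hypothesis $\widehat{T}(\nabla)=0$ gives $\rho(\nabla_u v - \nabla_v u) = [\rho(u),\rho(v)]$. Feeding this back into the $\rho$-image of the identity obtained in the first step forces the analogous equality $\rho(\nabla^*_u v - \nabla^*_v u) = [\rho(u),\rho(v)]$, whence
\begin{equation*}
\rho(\widehat{T}(\nabla^*)(u,v)) = \rho(\nabla^*_u v - \nabla^*_v u) - \rho([u,v]_E^{\widehat{\nabla^*}}) = 0.
\end{equation*}

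Finally, Corollary~\ref{lc1} applied to $\nabla^*$ concludes the argument. The only nontrivial part is the sign-bookkeeping in the first step: one must verify that~(\ref{l2b}) is precisely the balance that, under the anchor, cancels the $L$-contributions from the difference $[u,v]_E^{\nabla} - [u,v]_E^{\nabla^*}$ against the difference of the torsion terms. Beyond that check, the argument is purely a transfer of the anchor-level data from $\nabla$ to $\nabla^*$; no further admissibility, nondegeneracy or curvature hypotheses enter.
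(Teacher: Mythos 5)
Your overall strategy is the same as the paper's: both arguments funnel through Corollary (\ref{lc1}) by showing that the image of $\widehat{T}(\nabla^*)$ lies in the kernel of the anchor. The paper does this by computing the operator itself, arriving at $\widehat{T}(\nabla^*)(u,v) = -\widehat{L}(e^a, \Delta(\nabla,\nabla^*)(X_a,u),v)$ and invoking $\mathrm{Im}\,\widehat{L}\subseteq\ker\rho$; you work at the level of anchor images throughout. That difference is cosmetic.

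The problem is that the one step you defer --- ``the sign-bookkeeping in the first step'' --- is where the entire content of the corollary sits, and it does not close as you describe if (\ref{l2b}) is taken at face value. Substituting the definitions of the torsion operators into (\ref{l2b}) gives
\begin{equation*}
(\nabla_u v - \nabla_v u) - (\nabla^*_u v - \nabla^*_v u) = 2\left([u,v]_E^{\nabla} - [u,v]_E^{\nabla^*}\right) = -2\,L\!\left(e^a, \Delta(\nabla,\nabla^*)(X_a,u), v\right),
\end{equation*}
and the right-hand side involves the \emph{unprojected} locality operator $L$, whose image is not contained in $\ker\rho$ (only $\widehat{L}=\mathcal{P}L$ has that property). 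Applying $\rho$ and using $\widehat{T}(\nabla)=0$ together with $\rho([u,v]_E^{\widehat{\nabla^*}})=[\rho(u),\rho(v)]$ then yields $\rho(\widehat{T}(\nabla^*)(u,v)) = 2\,\rho\!\left(L(e^a,\Delta(\nabla,\nabla^*)(X_a,u),v)\right)$, which has no reason to vanish. The cancellation you are counting on does occur, but only under the sign convention of Equation (\ref{SSe11}), i.e.\ with the hypothesis read as $T(\nabla)-T(\nabla^*)=[u,v]_E^{\nabla^*}-[u,v]_E^{\nabla}$; that is also the convention under which the paper's own intermediate formula $\widehat{T}(\nabla^*)=-\widehat{L}(e^a,\Delta(\nabla,\nabla^*)(X_a,u),v)$ actually comes out. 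So your argument is structurally correct and salvageable, but you cannot wave at the bookkeeping: carried out with the stated sign it leaves a residual unprojected $L$-term, and the proof only closes once that sign is fixed.
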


\begin{proof} If Equation (\ref{l2b}) holds, then the difference between the projected $E$-torsion operators of $\nabla$ and $\nabla^*$ reads
\begin{equation} 
\widehat{T}(\nabla)(u, v) - \widehat{T}(\nabla^*)(u, v) = [u, v]_E^{\widehat{\nabla}} - [u, v]_E^{\widehat{\nabla^*}},
\nonumber
\end{equation}
which implies by the projected $E$-torsion-free assumption for $\nabla$
\begin{equation} \widehat{T}(\nabla^*)(u, v) = - \widehat{L}(e^a, \Delta(\nabla, \nabla^*)(X_a, u), v).
\nonumber
\end{equation}
As the image of $\widehat{L}$ is in the kernel of the anchor, by the previous corollary (\ref{lc1}), we get the desired result.
\end{proof}

\noindent Note that the assumption (\ref{l2b}) in the previous corollary (\ref{lc2}) is valid for conjugate $E$-connections (\ref{SSe16}) constructed from an $E$-statistical structure. 

For projected $E$-torsion-free linear $E$-connections (or for linear $E$-connections whose projected $E$-torsion operator's image is in the kernel of the anchor), one can define Hessian $E$-metrics.

\begin{Definition} For an $E$-flat, projected $E$-torsion-free linear $E$-connection $\nabla$, an $E$-metric $g$ is called a Hessian $E$-metric if it is locally of the form
\begin{equation} g = H(\nabla)(f),
\label{l3}
\end{equation}
for some $f \in C^{\infty}(M, \mathbb{R})$. In this case, the doublet $(g, \nabla)$ is called an $E$-Hessian structure.
\label{ld2}
\end{Definition}

Similarly to the usual case, there is a relation between $E$-Hessian structures and $E$-statistical structures.

\begin{Proposition} If $(g, \nabla)$ is an $E$-Hessian structure, then $g$ is $\nabla$-Codazzi.
\label{lp2}
\end{Proposition}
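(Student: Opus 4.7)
The plan is to unwind the definition of an $E$-Hessian structure and show that both symmetries required for total symmetry of $Q(\nabla, g) = \nabla g$ hold: symmetry in the last two slots is immediate from the symmetry of $g$, while symmetry in the first two slots will follow from a Ricci-type identity for $1$-forms applied to $Df$, together with $E$-flatness and projected $E$-torsion-freeness that are built into Definition (\ref{ld2}).

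First I would note that, locally, $g = H(\nabla)(f) = \nabla Df$ for some smooth $f$, and compute directly that for $\Omega := Df$ one has
\begin{equation}
(\nabla_u g)(v, w) = (\nabla^2_{u, v} \Omega)(w),
\nonumber
\end{equation}
by expanding $\rho(u)(g(v,w)) = \rho(u)((\nabla_v \Omega)(w))$ via the Leibniz rule for the linear $E$-connection. Then I would observe that since $g$ is symmetric and $\nabla$ acts by derivations, $(\nabla_u g)(v, w) = (\nabla_u g)(w, v)$ automatically. Therefore the task reduces to proving $(\nabla^2_{u, v} \Omega)(w) = (\nabla^2_{v, u} \Omega)(w)$.

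The main technical step is to derive a Ricci identity for $1$-forms analogous to Equation (\ref{eb14}). I would start from the trivial identity $[\rho(u), \rho(v)](\Omega(w)) = \rho([u,v]_E)(\Omega(w))$, expand both sides through the Leibniz rule, and then substitute the vector-field Ricci identity (\ref{eb14}) (rewritten in terms of $\nabla_u \nabla_v w - \nabla_v \nabla_u w$) along with the decomposition $\nabla_u v - \nabla_v u = \widehat{T}(\nabla)(u, v) + [u, v]_E^{\widehat{\nabla}}$. After this substitution, the $[u,v]_E$-terms cancel pairwise, and one is left with cross-terms of the form $\Omega(\nabla_{\widehat{L}(e^a, \nabla_{X_a} u, v)} w) + (\nabla_{\widehat{L}(e^a, \nabla_{X_a} u, v)} \Omega)(w)$. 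These combine via Leibniz to $\rho(\widehat{L}(e^a, \nabla_{X_a} u, v))(\Omega(w))$, which vanishes because the image of $\widehat{L}$ lies in $\ker \rho$. The remaining identity reads
\begin{equation}
(\nabla^2_{u, v} \Omega)(w) - (\nabla^2_{v, u} \Omega)(w) = -\Omega\bigl(R(\nabla)(u, v) w\bigr) - (\nabla_{\widehat{T}(\nabla)(u, v)} \Omega)(w).
\nonumber
\end{equation}

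With this Ricci identity in hand, the conclusion is immediate: the definition of an $E$-Hessian structure requires $\nabla$ to be $E$-flat and projected $E$-torsion-free, so both terms on the right-hand side vanish, giving symmetry of $\nabla^2 Df$ in its first two arguments. Combined with the symmetry in the last two arguments from $g = g^{\mathrm{T}}$, the $S_3$-orbit generates total symmetry of $\nabla g$, i.e.\ $g$ is $\nabla$-Codazzi. I expect the main obstacle to be the careful bookkeeping that turns the locality-operator cross-terms into something proportional to $\rho \circ \widehat{L}$, since this is precisely where the projected (rather than naive) versions of torsion and exterior derivative earn their keep; the admissibility of $\nabla$ is not explicitly needed for this argument, but it is already implicit in projected $E$-torsion-freeness through the conventions of Section (\ref{se2}).
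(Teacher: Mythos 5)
Your proof is correct, and at its core it runs on the same engine as the paper's: the Ricci identity (\ref{eb14}), $E$-flatness, projected $E$-torsion-freeness, and the fact that the image of $\widehat{L}$ lies in $\ker \rho$ so that $\rho([u,v]_E^{\widehat{\nabla}}) = [\rho(u), \rho(v)]$. The difference is organizational but genuine: the paper expands $Q(\nabla, g)(u,v,w)$ directly into triple $\rho$-derivatives of the potential $f$ and exhibits, term by term, two groups that are separately symmetric in $u$ and $v$; you instead package the entire computation into a Ricci identity for $1$-forms, $(\nabla^2_{u,v}\Omega)(w) - (\nabla^2_{v,u}\Omega)(w) = -\Omega(R(\nabla)(u,v)w) - (\nabla_{\widehat{T}(\nabla)(u,v)}\Omega)(w)$, and apply it to $\Omega = Df$. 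I checked this identity against the conventions of Section (\ref{se2}) and it holds for an arbitrary linear $E$-connection on a regular local pre-Leibniz algebroid with a locality projector; your handling of the $\widehat{L}$ cross-terms is exactly right. What your route buys is a statement that is slightly more general (symmetry of $\nabla\nabla\Omega$ in its first two slots for \emph{any} $E$-$1$-form $\Omega$ when $\nabla$ is $E$-flat and projected $E$-torsion-free, not just for exact ones) and cleaner bookkeeping, at the cost of having to establish the $1$-form Ricci identity, which the paper never states. One small caveat on your closing remark: projected $E$-torsion-freeness forces the anti-symmetry of the \emph{projected} modified bracket $[.,.]_E^{\widehat{\nabla}}$, not of the full modified bracket $[.,.]_E^{\nabla}$, so it does not literally entail admissibility; but since neither your argument nor the paper's actually uses admissibility here, this imprecision is harmless.
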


\begin{proof} If $g$ is an $E$-Hessian metric, it is of the form
\begin{equation} g(u, v) = H(\nabla)(f)(u, v) = \rho(u)(\rho(v)(f)) - Df(\nabla_u v),
\nonumber
\end{equation}
for some $f \in C^{\infty}(M, \mathbb{R})$ so that the $E$-non-metricity tensor of $\nabla$ and $g$ is given by
\begin{align} 
Q(\nabla, g)(u, v, w) &= \rho(u)(g(v, w)) - g(\nabla_u, w) - g(v, \nabla_u w) \nonumber\\
&= \rho(u)(\rho(v)(\rho(w)(f)) - Df(\nabla_v w)) \nonumber\\
& \quad - \rho(\nabla_u v)(\rho(w)(f)) - Df(\nabla_{\nabla_u v} w) \nonumber\\
& \quad - \rho(v)(\rho(\nabla_u w)(f)) - Df(\nabla_v \nabla_u w) \nonumber\\
&= \rho(u)(\rho(v)(\rho(w)(f))) - \rho(\nabla_u v)(\rho(w)(f)) \nonumber\\
& \quad - Df(\nabla_{\nabla_u v} w) - Df(\nabla_v \nabla_u w) \nonumber
\end{align}
By the projected $E$-torsion-freeness and $E$-flatness, the Ricci identity (\ref{eb14}) yields
\begin{equation} \nabla_{\nabla_u v} w + \nabla_v \nabla_u w = \nabla_{\nabla_v u} w + \nabla_u \nabla_v w,
\nonumber
\end{equation}
which implies that the following term is symmetric in $u$ and $v$
\begin{equation} Df'(\nabla_{\nabla_u v} w) + Df'(\nabla_v \nabla_u w),
\nonumber
\end{equation}
for all $f' \in C^{\infty}(M, \mathbb{R})$. The projected $E$-torsion-freeness also implies the symmetry of the following term in $u$ and $v$ due to the fact that the projected modified bracket is a pre-Leibniz bracket:
\begin{equation} \rho(\nabla_u v)(\rho(w)(f')) - \rho(u)(\rho(v)(\rho(w)(f'))),
\nonumber
\end{equation}
for all $f' \in C^{\infty}(M, \mathbb{R})$. These two observations together imply that the $E$-non-metricity tensor $Q(\nabla, g)(u, v, w)$ is symmetric in $u$ and $v$. Hence, $g$ is $\nabla$-Codazzi.
\end{proof}

\begin{Corollary} Let $(g, \nabla)$ be an $E$-Hessian structure. Then, the triplet $(g, Q(\nabla, g), T(\nabla))$ is an $E$-statistical structure if $\nabla$ is admissible.
\label{lc3}
\end{Corollary}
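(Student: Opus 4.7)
The plan is to verify directly the three conditions required by Definition \ref{SSd3} for the triplet $(g, Q(\nabla, g), T(\nabla))$ to be an $E$-statistical structure. The first condition, that $g$ is an $E$-metric, is immediate from the definition of an $E$-Hessian structure.

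For the total symmetry of $C := Q(\nabla, g)$, I would invoke Proposition \ref{lp2}, which has already established that $g$ is $\nabla$-Codazzi, i.e.\ $Q(\nabla, g)$ is symmetric in its first two slots. Combined with the symmetry in the last two slots, which is inherited automatically from the symmetry of the $E$-metric $g$ (since $Q(\nabla, g)(u, v, w) = \rho(u)(g(v, w)) - g(\nabla_u v, w) - g(v, \nabla_u w)$ is manifestly symmetric in $v \leftrightarrow w$), total symmetry follows at once.

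For $B := T(\nabla)$, the $C^{\infty}(M, \mathbb{R})$-bilinearity is a standard property of the $E$-torsion operator as defined in Equation (\ref{eb6}) and requires no extra argument. The anti-symmetry is where the admissibility hypothesis enters: by the remark following Equation (\ref{eb11}), the modified bracket $[\cdot, \cdot]_E^{\nabla}$ is anti-symmetric precisely when $\nabla$ is admissible, and substituting this anti-symmetry into $T(\nabla)(u, v) = \nabla_u v - \nabla_v u - [u, v]_E^{\nabla}$ immediately gives $T(\nabla)(u, v) = - T(\nabla)(v, u)$.

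There is no real obstacle here; the corollary is essentially a bookkeeping consequence of Proposition \ref{lp2} together with the admissibility criterion. The only point requiring care is recognizing that the Codazzi condition on $g$ yields symmetry only in the first two arguments of the non-metricity, and that the remaining symmetry in the last two arguments is automatic from the symmetry of $g$, so that Proposition \ref{lp2} indeed suffices to conclude total symmetry of $C$.
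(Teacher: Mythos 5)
Your proof is correct and takes essentially the same route as the paper: Proposition \ref{lp2} supplies the Codazzi property of $g$, hence the total symmetry of $C = Q(\nabla, g)$ (which you justify a bit more explicitly than the paper does, by combining the first-two-slot symmetry from \ref{lp2} with the automatic last-two-slot symmetry coming from the symmetry of $g$), and admissibility yields the anti-symmetry of $B = T(\nabla)$ via the anti-symmetry of the modified bracket $[\cdot,\cdot]_E^{\nabla}$. Nothing is missing.
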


\begin{proof} By the previous proposition (\ref{lp2}), $g$ is $\nabla$-Codazzi so that the $E$-non-metricity tensor $Q(\nabla, g)$ is totally symmetric for any $\nabla$. Moreover, for an admissible linear $E$-connection $\nabla$, the $E$-torsion operator $T(\nabla)$ is anti-symmetric. Hence, $(g, Q(\nabla, g), T(\nabla))$ is an $E$-statistical structure.
\end{proof}
In the manifold setting, the other implication is also valid. In other words, for an statistical structure $(g, \nabla)$ with a flat connection $\nabla$, the metric $g$ is a Hessian metric. Because of the local condition in the definition of Hessian metrics, this proof makes use of Poincar\'{e} lemma which states that every closed form is locally exact. On the algebroid case, when acting on smooth functions, the projected $E$-exterior derivative squares to 0, and the action of the coboundary map coincides with the action of the projected exterior derivative, i. e. $Df = \widehat{d}(\nabla)f$. Yet, the former does not hold for $E$-$p$-forms with $p \geq 1$, and there is no analogue of the Poincar\'{e} lemma. Therefore, we were not able to produce a proof of the other implication for the algebroid setting. 

Under certain assumptions, one can generalize the fundamental theorem (\ref{sss3}) of statistical geometry.

\begin{Proposition} Let $(g, \nabla, \nabla^*)$ be a conjugate $E$-connection structure on a local pre-Leibniz algebroid $(E, \rho, [.,.]_E)$ such that both projected modified brackets $[.,.]_E^{\widehat{\nabla}}$ and $[.,.]_E^{\widehat{\nabla^*}}$ share a holonomic local $E$-frame. Then,
\begin{equation} g(R(\nabla)(u, v)w, z) + g(R(\nabla^*)(u, v)z, w) = 0,
\label{l4}
\end{equation}
for all $u, v, w, z \in \mathfrak{X}(E)$.
\label{lp3}
\end{Proposition}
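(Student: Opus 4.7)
The plan is to exploit the holonomic frame assumption to kill the bracket-dependent terms in both $E$-curvatures, then apply the conjugation relation twice to $g$ and subtract. Since $R(\nabla)$, $R(\nabla^*)$ and $g$ are all $C^{\infty}(M, \mathbb{R})$-multilinear, it suffices to establish (\ref{l4}) on a common holonomic local $E$-frame $(X_a)$, and the full statement then follows by linearity.

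First I would rewrite the $E$-curvature from (\ref{eb8}) by absorbing the $\widehat{L}$ correction into the projected modified bracket, obtaining
\begin{equation*}
R(\nabla)(u,v)w = \nabla_u \nabla_v w - \nabla_v \nabla_u w - \nabla_{[u,v]_E^{\widehat{\nabla}}} w,
\end{equation*}
and analogously for $R(\nabla^*)$ with $[.,.]_E^{\widehat{\nabla^*}}$. On a common holonomic $E$-frame both projected modified brackets vanish on frame elements, so the curvatures collapse to $R(\nabla)(X_a, X_b) X_c = \nabla_{X_a}\nabla_{X_b} X_c - \nabla_{X_b}\nabla_{X_a} X_c$ and similarly for $\nabla^*$.

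Next I would apply the conjugation relation (\ref{SSe1}) twice to expand $\rho(X_a)\bigl(\rho(X_b)(g(X_c, X_d))\bigr)$, then antisymmetrize in $(a,b)$. The mixed-derivative terms of the form $g(\nabla_{X_\bullet} X_c, \nabla^*_{X_\bullet} X_d)$ cancel, leaving
\begin{equation*}
[\rho(X_a), \rho(X_b)]\bigl(g(X_c, X_d)\bigr) = g\bigl(R(\nabla)(X_a, X_b) X_c, X_d\bigr) + g\bigl(X_c, R(\nabla^*)(X_a, X_b) X_d\bigr).
\end{equation*}
The pre-Leibniz axiom (\ref{eb7}) rewrites the left-hand side as $\rho([X_a, X_b]_E)(g(X_c, X_d))$. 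Holonomy of $[.,.]_E^{\widehat{\nabla}}$ on the frame then gives $[X_a, X_b]_E = \widehat{L}(e^c, \nabla_{X_c} X_a, X_b)$, which lies in $\ker \rho$ by the defining property of the locality projector. Hence the left-hand side vanishes, and using the symmetry of $g$ to rewrite the second term as $g(R(\nabla^*)(X_a, X_b) X_d, X_c)$, followed by multilinearity, yields the claimed identity for arbitrary $u, v, w, z$.

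The only real obstacle is conceptual bookkeeping: one must verify that the holonomic condition on the two projected brackets (rather than on $[.,.]_E$ itself) nonetheless forces $\rho([X_a, X_b]_E) = 0$, which works precisely because either of the two decompositions $[X_a, X_b]_E = \widehat{L}(e^c, \nabla_{X_c} X_a, X_b) = \widehat{L}(e^c, \nabla^*_{X_c} X_a, X_b)$ lands in the kernel of the anchor. Note that no admissibility hypothesis on $\nabla$ or $\nabla^*$ is required for this argument; only the pre-Leibniz condition and the existence of a locality projector enter, which is why the statement is formulated on a local pre-Leibniz algebroid rather than on an anti-commutable one.
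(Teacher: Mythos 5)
Your proposal is correct and follows essentially the same route as the paper's proof: reduce to a common holonomic frame by tensoriality, apply the conjugation relation twice so the mixed terms cancel, and identify the residue as $[\rho(X_a),\rho(X_b)](g(X_c,X_d))$, which vanishes because the projected modified bracket of frame elements is zero and $\widehat{L}$ lands in $\ker\rho$. Your closing observation that no admissibility is needed also matches the paper's hypotheses.
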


\begin{proof} We start with the left-hand side $LHS$ of Equation (\ref{l4}) on a local $E$-frame $(X_a)$ which is holonomic for both projected modified brackets, so that the terms with these brackets vanish:
\begin{align} 
LHS(X_a, X_b, X_c, X_d) &= g(R(\nabla)(X_a, X_b) X_c, X_d) + g(R(\nabla^*)(X_a, X_b) X_d, X_c) \nonumber\\
&= g(\nabla_{X_a} \nabla_{X_b} X_c - \nabla_{X_b} \nabla_{X_a} X_c, X_d) + g(\nabla^*_{X_a} \nabla^*_{X_b} X_d - \nabla^*_{X_b} \nabla^*_{X_a} X_d, X_c).
\nonumber
\end{align}
Next, we observe that the conjugation condition (\ref{SSe1}) implies that
\begin{align} 
&g(\nabla_{X_a} \nabla_{X_b} X_c, X_d) = \rho(X_a)(g(\nabla_{X_b} X_c, X_d)) - g(\nabla_{X_b} X_c, \nabla^*_{X_a} X_d), \nonumber\\
&g(\nabla^*_{X_a} \nabla^*_{X_b} X_c, X_d) = \rho(X_a)(g(\nabla^*_{X_b} X_c, X_d)) - g(\nabla^*_{X_b} X_c, \nabla_{X_a} X_d).
\nonumber
\end{align}
Inserting these terms in $LHS$ and canceling out, one gets:
\begin{equation} LHS(X_a, X_b, X_c, X_d) = [\rho(X_a), \rho(X_b)](g(X_c, X_d)).
\nonumber
\end{equation}
As both projected modified brackets are pre-Lie brackets, one has
\begin{equation} [\rho(X_a), \rho(X_b)] = \rho([X_a, X_b]_E^{\widehat{\nabla}}) = \rho(0) = 0,
\nonumber
\end{equation}
which implies the desired result by the $C^{\infty}(M, \mathbb{R})$-linearity of $g$ and $R(\nabla)$.

\end{proof}
Having a mutual holonomic local $E$-frame $(X_a)$ for the projected modified brackets $[.,.]_E^{\widehat{\nabla}}$ and $[.,.]_E^{\widehat{\nabla^*}}$ necessitates the following conditions:
\begin{align} 
&\widehat{\gamma}(\nabla)^a_{\ b c} = \gamma^a_{\ b c} - \Gamma(\nabla)^e_{\ d b} \widehat{L}^{a d}_{\ \ e c} = 0, \nonumber\\
&\widehat{\gamma}(\nabla^*)^a_{\ b c} = \gamma^a_{\ b c} - \Gamma(\nabla^*)^e_{\ d b} \widehat{L}^{a d}_{\ \ e c} = 0. \nonumber
\label{l5}
\end{align}
These together imply that 
\begin{equation} \Gamma(\nabla)^e_{\ d b} \widehat{L}^{a d}_{\ \ e c} = \Gamma(\nabla^*)^e_{\ d b} \widehat{L}^{a d}_{\ \ e c} = - \gamma^a_{\ b c}.
\label{l6}
\end{equation}
This mutual holonomic local $E$-frame assumption becomes trivial when one restricts to the usual manifold setting. As $L = 0$ for the tangent bundle, the projected modified anholonomy coefficients are identical with the usual anholonomy coefficients. The coordinate frames are by definition holonomic, so the assumption of Proposition (\ref{lp3}) is automatically satisfied. It seems possible to weaken this assumption, yet we could not find an alternative proof that does not depend on the choice of a local $E$-frame. For example, on a $g$-orthonormal local $E$-frame, which always exists, the term $[\rho(X_a), \rho(X_b)](g(X_c, X_d))$ automatically vanishes. Similarly, $g$-orthonormality also cancels out other terms, yet the following term does not vanish:
\begin{equation} - g \left( \nabla_{L(e^e, \Delta(\nabla, \nabla^*)(X_e, X_a), X_b)} X_c, X_d \right).
\label{l6bb}
\end{equation}

Completely identical to the usual case, one can define linear $E$-connections with constant $E$-curvature.

\begin{Definition} A linear $E$-connection $\nabla$ is said to have constant $E$-curvature $\kappa \in \mathbb{R}$ if
\begin{equation} R(\nabla)(u, v) w = \kappa \left[ g(v, w) u - g(u, w) v \right],
\label{l7}
\end{equation}
for all $u, v, w \in \mathfrak{X}(E)$.
\label{ld3}
\end{Definition}
Note that the right-hand side of Equation (\ref{l7}) is anti-symmetric in $u$ and $v$, so that having constant $E$-curvature necessitates the anti-symmetry of the $E$-curvature operator $R(\nabla)(u, v) w$ in $u$ and $v$. This is the case when the linear $E$-connection is admissible because when $\nabla$ is admissible, the projected modified bracket $[.,.]_E^{\widehat{\nabla}}$ is anti-symmetric, so that the $E$-curvature operator is anti-symmetric.

\begin{Corollary} Under the same assumptions as Proposition (\ref{lp3}), a linear $E$-connection $\nabla$ has a constant curvature $\kappa$ if and only if its conjugate $\nabla^*$ has a constant curvature $\kappa$.
\label{lc4}
\end{Corollary}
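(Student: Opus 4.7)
The plan is to derive the corollary directly from the fundamental-theorem-type identity in Proposition (\ref{lp3}), combined with the explicit form of constant $E$-curvature from Definition (\ref{ld3}). The non-degeneracy of $g$ is what will let us pass from a scalar identity back to an identity between $E$-vector-field-valued maps.

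First I would assume $\nabla$ has constant $E$-curvature $\kappa$. Substituting Definition (\ref{ld3}) into the left-hand scalar $g(R(\nabla)(u,v)w,z)$ yields
\begin{equation}
g(R(\nabla)(u,v)w,z) = \kappa\bigl[g(v,w)g(u,z) - g(u,w)g(v,z)\bigr].
\nonumber
\end{equation}
Next, plugging this into the identity of Proposition (\ref{lp3}) gives
\begin{equation}
g(R(\nabla^*)(u,v)z,w) = -\kappa\bigl[g(v,w)g(u,z) - g(u,w)g(v,z)\bigr] = \kappa\bigl[g(v,z)g(u,w) - g(u,z)g(v,w)\bigr],
\nonumber
\end{equation}
which, after reading the bracketed expression as $g\bigl(\kappa[g(v,z)u - g(u,z)v],\,w\bigr)$, is precisely the pairing of the constant-curvature formula for $\nabla^*$ (with arguments $(u,v,z)$) against the arbitrary $w\in\mathfrak{X}(E)$. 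Since $g$ is non-degenerate, this forces
\begin{equation}
R(\nabla^*)(u,v)z = \kappa\bigl[g(v,z)u - g(u,z)v\bigr],
\nonumber
\end{equation}
so $\nabla^*$ has constant $E$-curvature $\kappa$.

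For the converse I would invoke the involutivity of conjugation, $(\nabla^*)^*=\nabla$, which is immediate from Definition (\ref{SSd1}) and the symmetry of $g$; the hypothesis of Proposition (\ref{lp3}) is also symmetric in $\nabla$ and $\nabla^*$, so running the argument above with the roles swapped gives the reverse implication. I do not expect any significant obstacle: the corollary is essentially a bookkeeping consequence of Proposition (\ref{lp3}), and the only point requiring mild care is verifying that the rearrangement of arguments on the $\nabla^*$-side matches the standard form of the constant-curvature identity before appealing to non-degeneracy of $g$.
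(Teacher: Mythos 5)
Your proposal is correct and follows essentially the same route as the paper: substitute the constant-curvature formula into the identity of Proposition (\ref{lp3}), rearrange the resulting scalar expression into the form $g\bigl(\kappa[g(v,z)u - g(u,z)v], w\bigr)$, and conclude by non-degeneracy of $g$. Your explicit remark that the converse follows from the involutivity of conjugation and the symmetry of the hypotheses is a small clarification the paper leaves implicit, but it is the same argument.
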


\begin{proof} By the previous proposition (\ref{lp3}), one has
\begin{equation} g(R(\nabla)(u, v)w, z) = - g(R(\nabla^*)(u, v)z, w),
\nonumber
\end{equation}
which implies for linear $E$-connection $\nabla$ with a constant $E$-curvature
\begin{align}
g(R(\nabla^*)(u, v)z, w) &= - g \left( \kappa \left[ g(v, w) u - g(u, w) v \right], z \right) \nonumber\\
&= - \kappa \left[ g(v, w) g(u, z) - g(u, w) g(v, z) \right] \nonumber\\
&= - g \left( \kappa \left[ g(u, z) v - g(v, z) u \right], w \right) \nonumber\\
&= g \left( \kappa \left[ g(v, z) u - g(u, z) v \right], w \right). \nonumber
\end{align}
As the $E$-metric $g$ is non-degenerate, this yields
\begin{equation} R(\nabla^*)(u, v) z = \kappa \left[ g(v, z) u - g(u, z) v \right],
\nonumber
\end{equation}
which is the desired result.
\end{proof}

\section{Concluding Remarks}
\label{se7}

\noindent In this paper, we introduced statistical, conjugate connection and Hessian structures on anti-commutable pre-Leibniz algebroids. These algebroids are defined with a property in which the symmetrization of the bracket satisfies a certain condition depending on the choice of an equivalence class of connections. Such `admissible' connections turn out to be necessary for appropriate generalizations of these structures which are normally defined in the manifold setting. The admissibility condition for a connection results the anti-symmetry of torsion and curvature operators, which is a necessary property for many features of metric-affine geometries. In our previous paper, we proved Bianchi identity, Cartan structure equations and the decomposition of the connection in terms of its torsion and non-metricity tensors for admissible connections. In this paper, we built up on the theory of admissible connections, and showed the admissibility condition is crucial for generalizations of aforementioned structures. We proved that statistical and conjugate connection structures are equivalent provided certain assumptions hold. We also introduced generalizations of strongly conjugate connections, $\alpha$-connections, relative torsion tensors and prove many results completely analogous to the usual manifold setting. We defined the generalizations Hessian metrics and figured out the necessary condition for a connections to give Hessian metrics. Moreover, we proved that the existence of Hessian structures yields a statistical structure. We also prove a mild generalization of the fundamental theorem of statistical geometry concerning the curvature operators of two conjugate connections provided certain conditions about anholonomy coefficients. 

In the near future, we plan to work on other applications of admissible connections and study the generalizations of complex structures. As a special case, we wish to work on K\"{a}hler structures and their relations to statistical and Hessian structures. One of our aim will be to find out how one can generalize the fact that the tangent bundle of a smooth manifold endowed with a Hessian metric carries a natural K\"{a}hler structure \cite{22} and work on the details of this relation. There is also another closely related construction called a locally equiaffine structure which is defined by the existence of a local volume form whose covariant derivative vanishes \cite{23}. These structures are closely tide with the Ricci symmetric connections , whereas the Ricci symmetry is equivalent to the existence of trivial Weyl structures \cite{24}. Currently, we are working on the generalizations of conformal and projective structures, which are again heavily depend on the admissibility condition. Weyl structures naturally arises from the compatibility condition of these structures \cite{25}, and we wish to relate all these constructions together in a completely parallel way to the manifold setting. We leave out the possible applications of statistical structures on pre-Leibniz algebroids to statistical models to the experts in the field.

\section*{Acknowledgment}
This work is dedicated to Tekin Dereli in honor of his retirement. He is one of the most renowned figures in Turkish mathematical physics community. I am quite lucky to be one of his PhD students and a member of a big academic family-tree created by him. I sincerely thank him for his invaluable contribution to my knowledge and wish him a life full of science.

\newpage


\begin{thebibliography}{50}
\bibitem{1} C. Marle, Calculus on Lie algebroids, Lie groupoids and Poisson manifolds. \textit{Dissertationes Mathematicae 457}, 1-57 (2008); \href{https://arxiv.org/abs/0806.0919v3}{arXiv:0806.0919 [math.DG]}.
\bibitem{2} Z. Liu, A. Weinstein, \& P. Xu, Manin triples for Lie bialgebroids. \textit{Journal of Differential Geometry, 45}, 547-574 (1997); \href{https://arxiv.org/abs/dg-ga/9508013v3}{arXiv:dg-ga/9508013}.
\bibitem{3} O. Hohm, C. Hull, \& B. Zwiebach, Generalized metric formulation of double field theory. \textit{Journal of High Energy Physics, 8} (2010); \href{https://arxiv.org/abs/1006.4823}{arXiv:1006.4823 [hep-th]}.
\bibitem{4} C. Hull, Generalised geometry for M-theory. \textit{Journal of High Energy Physics 07} (2007); \href{https://arxiv.org/abs/hep-th/0701203}{arXiv:hep-th/0701203}.
\bibitem{5} I. Vaisman, Towards a double field theory on para-Hermitian manifolds. \textit{Journal of Mathematical Physics 54} (2013); \href{https://arxiv.org/abs/1209.0152}{arXiv:1209.0152 [math.DG]}.
\bibitem{6} M. J. Lean, Dorfman connections and Courant algebroids. \textit{Journal de Mathématiques Pures et Appliquées 116}, 1-39 (2018); \href{https://arxiv.org/abs/1209.6077v2}{arXiv:1209.6077 [math.DG]}.
\bibitem{7} D. Baraglia, Conformal Courant algebroids and orientifold T-duality. \textit{International Journal of Geometric Methods in Modern Physics 10}, 1-35 (2013); \href{https://arxiv.org/abs/1109.0875v2}{arXiv:1109.0875 [math.DG]}.
\bibitem{8} T. Dereli, \& K. Doğan, Metric-connection geometries on pre-Leibniz algebroids: A search for geometrical structure in string models. \textit{Journal of Mathematical Physics 62}, (2021); \href{https://arxiv.org/abs/2006.05957}{arXiv:2006.05957 [hep-th]}.
\bibitem{9} B. Jur\u{c}o, \& J. Vysok\'{y}, Leibniz algebroids, generalized Bismut connections and Einstein-Hilbert actions. \textit{Journal of Geometry and Physics 97}, 25-33 (2015); \href{https://arxiv.org/abs/1503.03069}{arXiv:1503.03069 [hep-th]}.
\bibitem{10} T. Dereli, \& K. Doğan, `Anti-commutable pre-Leibniz algebroids and admissible connections, (submitted), (2021), \href{https://arxiv.org/abs/2108.10199}{arXiv:2108.10199 [math.DG]}.
\bibitem{11} H. V. Le, Statistical manifolds are statistical models. \textit{Journal of Geometry 84}, 83-93 (2006); \href{https://arxiv.org/abs/0805.3405v2}{arXiv:0805.3405 [math.DG]}.
\bibitem{12} O. Calin \& C. Udriste, \textit{Geometric modeling in probability and statistics}, (Springer, 2014).
\bibitem{13} S. L. Lauritzen, Statistical manifolds. \textit{Differential Geometry in Statistical Inference 10}, 163-216 (1987).
1987. \textit{Journal of Symplectic Geometry 7}, 311-335 (2009).
\bibitem{14} F. Nielsen, An elementary introduction to information geometry. \textit{Entropy 22}, 1-62 (2020); \href{https://arxiv.org/abs/1808.08271}{arXiv:1808.08271 [cs.LG]}.
\bibitem{15} R. J. Fernandes, Lie algebroids, holonomy and characteristic classes. \textit{Advances in Mathematics, 170 }, 119-179 (2002); \href{https://arxiv.org/abs/math/0007132}{arXiv:math/0007132[math.DG]}.
\bibitem{16} J. Grabowski, D. Khudaverdyan, \& N. Poncin, The supergeometry of Loday algebroids. \textit{The Journal of Geometric Mechanics 5 }, 185-21 (2011); \href{https://arxiv.org/abs/1103.5852}{arXiv:1103.5852 [math.DG]}.
\bibitem{17} A. Wade, On some properties of Leibniz algebroids. In T. P. Robart, J. A. Leslie, \& A. Banyaga (Eds.) \textit{Infinite dimensional Lie groups in geometry and representation theory} (1st ed. 65-78), (Washington, DC: World Scientific, 2002).
\bibitem{18} K. Grabowska, J. Grabowski, M. Ku\'{s}, \& G. Marmo, Lie groupoids in information geometry. \textit{Journal of Physics A: Mathematical and Theoretical 52}, (2019); \href{https://arxiv.org/abs/1904.00709v1}{arXiv:1904.00709 [math-ph]}.
\bibitem{19} A. M. Blaga, \& A Nannicini, Generalized quasi-statistical structures. \textit{Bulletin of the Belgian Mathematical Society 27}, (2019); 731-754 \href{https://arxiv.org/abs/1809.04784v1}{arXiv:1809.04784 [math.DG]}.
\bibitem{20} A. M. Blaga, \& A Nannicini, $\alpha$-connections in generalized geometry. \textit{
Journal of Geometry and Physics 219}, (2021); \href{https://arxiv.org/abs/2004.05036v1}{arXiv:2004.05036 [math.DG]}.
\bibitem{21} H. Shima, \textit{The geometry of Hessian structures}. (World Scientific, 2007).
\bibitem{22} H. Furuhata, Hypersurfaces in statistical manifolds. \textit{Differential Geometry and its Applications 27}, (2009); 420–429.
\bibitem{23} H. Matsuzoe, J. Takeuchi, \& S. Amari, Equiaffine structures on statistical manifolds and Bayesian statistics. \textit{Differential Geometry and Its Applications 24}, (2006); 567–578.
\bibitem{24} E. Garc\'{i}a-R\'{i}o, P. Gilkey, S. Nik\v{c}evi\'{c}, \& R. V\'{a}azquez-Lorenzo, \textit{Applications of affine and Weyl geometry}, (Morgan \& Claypool Publishers, 2013).
\bibitem{25} V. S. Matveev, \& E. Scholz, Light cone and Weyl compatibility of conformal and projective structures. \textit{General Relativity and Gravitation 52}, 52-66 (2020); \href{https://arxiv.org/abs/2001.01494}{arXiv:2001.01494 [math.DG]}.


\end{thebibliography}
\end{document}